\documentclass[11pt,reqno]{amsart}

\usepackage{amssymb,amsmath,amsthm,mathtools}
\usepackage{tikz,tikz-cd}
\usetikzlibrary{intersections}
\usepackage[T1]{fontenc}
\usepackage{palatino}
\usepackage{microtype}
\usepackage[mathscr]{euscript}
\usepackage{stmaryrd}
\usepackage[pagebackref]{hyperref}
\hypersetup{
	colorlinks=true,
	linkcolor=blue,
	urlcolor=blue,
	citecolor=blue,
	linktocpage
}
\usepackage[alphabetic,nobysame]{amsrefs}

\swapnumbers
\newtheorem{thm}{Theorem}[section]
\newtheorem{lem}[thm]{Lemma}
\newtheorem{prop}[thm]{Proposition}

\theoremstyle{definition}
\newtheorem{defn}[thm]{Definition}
\newtheorem{rmk}[thm]{Remark}
\newtheorem{ex}[thm]{Example}

\newcommand{\HH}{H_{n,p}}
\newcommand{\VV}{V_{n,p}}

\newcommand{\C}{\mathbb{C}}
\newcommand{\Q}{\mathbb{Q}}
\newcommand{\TL}{\operatorname{TL}}
\newcommand{\T}{\operatorname{T}}
\newcommand{\ov}{\overline}

\newcommand{\epic}{
	\begin{tikzpicture}[scale = 0.35,thick, baseline={(0,-1ex/2)}]
		\tikzstyle{vertex} = [shape = circle, minimum size = 4pt, inner sep = 1pt]
		\node[vertex] (G--2) at (1.5, -1) [shape = circle, draw,fill=black] {};
		\node[vertex] (G--1) at (0.0, -1) [shape = circle, draw,fill=black] {};
		\node[vertex] (G-1) at (0.0, 1) [shape = circle, draw,fill=black] {};
		\node[vertex] (G-2) at (1.5, 1) [shape = circle, draw,fill=black] {};
		\draw (G--2) .. controls +(-0.5, 0.5) and +(0.5, 0.5) .. (G--1);
		\draw (G-1) .. controls +(0.5, -0.5) and +(-0.5, -0.5) .. (G-2);
	\end{tikzpicture}
}

\newcommand{\onepic}{
	\begin{tikzpicture}[scale = 0.35,thick, baseline={(0,-1ex/2)}]
		\tikzstyle{vertex} = [shape = circle, minimum size = 4pt, inner sep = 1pt]
		\node[vertex] (G--1) at (0.0, -1) [shape = circle, draw,fill=black] {};
		\node[vertex] (G-1) at (0.0, 1) [shape = circle, draw,fill=black] {};
		\draw (G-1) .. controls +(0, -1) and +(0, 1) .. (G--1);
	\end{tikzpicture}
}

\allowdisplaybreaks
\title[The Center of the Temperley--Lieb Algebra]{The Center of the Temperley--Lieb Algebra}
\author{Anthony Giaquinto}
\address{\parbox{\linewidth}{Department of Mathematics and Statistics,
		Loyola University Chicago,\\ Chicago, IL 60660 USA}}
		\email{agiaqui@luc.edu}
\author{Mitja Mastnak}
\address{\parbox{\linewidth}{Department of Mathematics and Computing Science,
		Saint Mary's University,\\ Halifax, NS, Canada B3H C3C}}
	\email{mitja.mastnak@smu.ca}
	\thanks{The second author was supported, in part, by NSERC (Canada).}

	\subjclass{Primary 16S99, 16G10, 16U70}
	\keywords{Temperley--Lieb algebras, diagram algebra, center, cellular algebra}
\begin{document}

\begin{abstract}
We compute the dimension of the center of the Temperley--Lieb algebra
\(\TL_n(\delta)\) over a field of characteristic zero for every nonzero value
of the parameter \(\delta\). The proof uses the cellular filtration by cup number, together with known facts about the representation theory of the Temperley--Lieb algebra, especially
the structure of its standard modules and their radicals. Dilation and compression
maps compare the induced graded pieces of the center at levels \(n\) and
\(n-2\), giving an upper bound of one for each such piece. A deformation argument gives the matching lower bound, and hence
\[
\dim Z(\TL_n(\delta))=1+\Bigl\lfloor \frac{n}{2}\Bigr\rfloor
\qquad (\operatorname{char}\Bbbk=0,\; \delta\neq 0).
\]
We also prove that every central element is fixed by the canonical
anti-automorphism and by the natural diagram-reflection automorphism. Finally,
we give a congruence criterion for the trivial-radical case and record a
Gram-matrix computation for leading terms.
\end{abstract}
\maketitle
\tableofcontents
\section{Introduction}

The Temperley--Lieb algebra was introduced by Temperley and Lieb in their
study of lattice models in statistical mechanics \cite{TemperleyLieb}. It
later became a fundamental object in subfactor theory, knot theory, and
diagrammatic representation theory. Jones's seminal work on subfactors showed
that the Temperley--Lieb relations are deeply connected with operator
algebras, braid group representations, and low-dimensional topology, leading
to the Jones polynomial \cite{JonesSubfactors}. Kauffman's bracket model
supplied one of the standard planar diagrammatic realizations of these
algebras \cite{KauffmanState}. The Temperley--Lieb algebras are also
prototypical examples of cellular algebras in the sense of Graham and Lehrer
\cite{GrahamLehrer}; the cellular filtration and the associated standard
modules play a central role in what follows. For detailed accounts of the
algebraic, diagrammatic, and historical aspects of Temperley--Lieb algebras,
see Ridout--Saint-Aubin \cite{RidoutSaintAubin} and Doty--Giaquinto
\cite{DotyGiaquinto}.

In characteristic zero, much of the structure of the Temperley--Lieb algebras
is known. At generic parameters \(\TL_n(\delta)\) is split semisimple, while
at roots of unity its representation theory is governed by the standard
modules and their radicals. After writing \(\delta=q+q^{-1}\), the usual generic
case corresponds to \(q\) not being a root of
unity. More precisely, for \(\TL_n(\delta)\), non-semisimplicity can occur only
when \(\ell\le n\), where \(\ell\) is the smallest positive integer such that
\(q^{2\ell}=1\). Thus roots of unity with \(\ell>n\) behave generically for
\(\TL_n(\delta)\). The root-of-unity case, including the non-semisimple
structure, was carefully analyzed by Goodman and Wenzl
\cite{GoodmanWenzl}. The present paper uses most directly the diagrammatic
formulation of Ridout and Saint-Aubin \cite{RidoutSaintAubin}, who give a
detailed treatment of the standard modules and their radicals in the
non-generic case.

The algebra \(\TL_n(\delta)\) also has a representation-theoretic realization
as a centralizer algebra for the action of the quantum group
\(U_q(\mathfrak{sl}_2)\) on tensor powers of its natural two-dimensional
module. This gives another approach to the structure of the Temperley--Lieb algebra through Schur--Weyl duality and tilting modules; see,
for example, Andersen \cite{AndersenTL}. For related categorical treatments at
roots of unity involving fusion categories, see Iohara--Lehrer--Zhang
\cite{ILZJonesQuotient}.

In this paper, we consider one aspect of the structure of the
Temperley--Lieb algebra, namely its center. In the split semisimple case, the
center has one primitive central idempotent for each simple summand, so its
dimension is equal to the number of simple modules. In the non-generic case,
this description is no longer available, and other means are needed to study
the center. In this setting, the center is closely related to the block
structure of the algebra and to the way the radical enters its representation
theory. 

A number of important central elements in Temperley--Lieb algebras are known.
These include full twists descending from braid groups, Jucys--Murphy-type
elements descending from Hecke algebras, central elements such as the element
\(F_n\) used in \cite{RidoutSaintAubin}, and constructions involving
Jones--Wenzl idempotents. These constructions produce important central elements and related families. However, they do not by themselves give a uniform computation of the dimension
of the center. Related questions about centers have also been studied for affine and affine
nil Temperley--Lieb algebras; see \cite{VlasenkoCenterQuotients} and
\cite{BenkartMeinelAffineNilCenter}.

The main result of the present paper is a direct
computation of the dimension of the center \(Z(\TL_n(\delta))\) for all
\(n\) and all \(\delta\in \Bbbk^{\times}\) when the ground field \(\Bbbk\) has characteristic zero. Although \(\TL_n(\delta)\) can become nonsemisimple at roots of unity, the
result shows that, for fixed \(n\), the dimension of the center is constant as
\(\delta\) ranges over all nonzero values. Specifically, we prove that
\[
\dim_{\Bbbk} Z(\TL_n(\delta))
=
1+\left\lfloor \frac n2\right\rfloor .
\]
This dimension can also be recovered indirectly from Westbury's description
of the blocks and the Hochschild cohomology computation of de la Pe\~na and
Xi~\cite{Westbury,DeLaPenaXi}. 
In fact, we prove the following stronger filtration assertion. Let \(\T_{n,p}\) be the span
of all diagrams with at least \(p\) cups, and set \(Z_{n,p}=Z(\TL_n(\delta))\cap \T_{n,p}.\)
Then
\[
\dim_{\Bbbk}(Z_{n,p}/Z_{n,p+1})=1
\qquad
\text{for }0\le p\le \left\lfloor \frac n2\right\rfloor .
\]
The proof is by induction on \(n\). It uses only modest
representation-theoretic input, principally the known criterion for when the
radical of a Temperley--Lieb standard module is the trivial module. The
diagrammatic tools  are the operations of dilation and compression, which compare
the centers of \(\TL_n(\delta)\) and \(\TL_{n-2}(\delta)\). Dilation inserts a cup-cap pair in positions \(i\) and \(i+1\), while compression
removes one after multiplying on the left and right by a generator. Compression
sends central elements of \(\TL_n(\delta)\) to central elements of
\(\TL_{n-2}(\delta)\) and provides the inductive step between these levels. 
When the relevant radical is
the trivial module, the leading term can be written explicitly. In all other
cases, the induced compression map is injective on the central graded quotient, and the central graded
class in degree \(p\) at level \(n\) is recovered, up to scalar, from the
class in degree \(p-1\) at level \(n-2\) by inverse compression on the
associated graded quotient.

We also prove an invariance theorem for the center. The algebra
\(\TL_n(\delta)\) has a canonical anti-automorphism \(\omega\), coming from
the cellular structure, and a natural automorphism \(\sigma\), given on
generators by \(e_i\mapsto e_{n-i}\). Diagrammatically, these are reflection
through a horizontal line and reflection through a vertical line, respectively.
We prove that both maps restrict to the identity on \(Z(\TL_n(\delta))\).

In the last two sections, we give a congruence criterion for when the radical
of a standard module is the trivial module, and we record a Gram-matrix
computation for leading terms.

\section{Preliminaries}

Fix a field \(\Bbbk\) of characteristic zero and \(\delta\in\Bbbk^{\times}\). The Temperley--Lieb algebra \(\TL_n(\delta)\) is the unital associative \(\Bbbk\)-algebra generated by \(e_1,\dots,e_{n-1}\) subject to the relations
\[
e_i^2=\delta e_i,\qquad e_ie_{i\pm 1}e_i=e_i,\qquad e_ie_j=e_je_i \quad (|i-j|>1).
\]

The algebra may also be described diagrammatically. A Temperley--Lieb \(n\)-diagram consists of a planar pairing of \(2n\) marked points on a rectangle with \(n\) points on opposite edges. For example, the diagram \(d\) below is a Temperley--Lieb \(6\)-diagram. 
\[
\begin{tikzpicture}[scale = .42,thick, baseline={(0,-1ex/2)}] 
	\tikzstyle{vertex} = [shape = circle, minimum size = 4pt, inner sep = 1pt,
	fill=black] 
	\node[vertex] (G--6) at (7.5, -1) [shape = circle, draw] {}; 
	\node[vertex] (G--5) at (6.0, -1) [shape = circle, draw] {}; 
	\node[vertex] (G--2) at (1.5, -1) [shape = circle, draw] {}; 
	\node[vertex] (G--4) at (4.5, -1) [shape = circle, draw] {}; 
	\node[vertex] (G--3) at (3.0, -1) [shape = circle, draw] {}; 
	\node[vertex] (G--1) at (0.0, -1) [shape = circle, draw] {}; 
	\node[vertex] (G-1) at (0.0, 1) [shape = circle, draw] {}; 
	\node[vertex] (G-2) at (1.5, 1) [shape = circle, draw] {}; 
	\node[vertex] (G-3) at (3.0, 1) [shape = circle, draw] {}; 
	\node[vertex] (G-4) at (4.5, 1) [shape = circle, draw] {}; 
	\node[vertex] (G-5) at (6.0, 1) [shape = circle, draw] {}; 
	\node[vertex] (G-6) at (7.5, 1) [shape = circle, draw] {}; 
	\draw[] (G--6) .. controls +(-0.5, 0.5) and +(0.5, 0.5) .. (G--5); 
	\draw[] (G-6) .. controls +(-1, -1) and +(1, 1) .. (G--4); 
	\draw[] (G--3) .. controls +(-0.5, 0.5) and +(0.5, 0.5) .. (G--2); 
	\draw[] (G-1) .. controls +(0, -1) and +(0, 1) .. (G--1); 
	\draw[] (G-2) .. controls +(1, -1) and +(-1, -1) .. (G-5); 
	\draw[] (G-3) .. controls +(0.5, -0.5) and +(-0.5, -0.5) .. (G-4); 
\end{tikzpicture} \]

The horizontal connections in the top and bottom row are called cups and caps, respectively, and the number of each is necessarily the same in any diagram. The generator \(e_i\) corresponds to the diagram
\[
e_i \;\Leftrightarrow\;
\onepic \cdots \onepic\quad \epic \quad\onepic \cdots \onepic
\]
where the horizontal cup and cap connect the positions \(i\) and \(i+1\) on the top and bottom rows. Multiplication of diagrams is given by concatenation and removal of closed loops, each loop contributing a factor of \(\delta\). Since
this multiplication is standard, we do not review it here; see
\cite{DotyGiaquinto,RidoutSaintAubin} for details.

The algebra \(\TL_n(\delta)\) is a fundamental example of a cellular algebra in the sense of Graham and Lehrer \cite{GrahamLehrer}. We do not need the full axiomatic characterization of a cellular algebra in this paper and so only those aspects of cellular algebras necessary for our results are given as they pertain to the Temperley--Lieb algebras.

\medskip
\noindent\textbf{The cellular filtration and standard modules.}

A fundamental aspect of a cellular algebra is its natural filtration. For \(\TL_n(\delta)\) the filtration is determined by the number of cup-cap pairs present. Specifically,  let \(c(d)\) denote the number of cups in the top row of a diagram \(d\).
For each \(p\ge 0\), let \(\T_{n,p}\) be the span of all basis diagrams \(d\)
with \(c(d)\ge p\). This gives a decreasing filtration by two-sided ideals
\[
\TL_n(\delta)=\T_{n,0}\supseteq \T_{n,1}\supseteq \cdots \supseteq
\T_{n,\lfloor n/2\rfloor}\supseteq \T_{n,\lfloor n/2\rfloor+1}=0.
\]
Intersecting with the center and setting \(Z_{n,p}=Z(\TL_n(\delta)) \cap \T_{n,p}\) gives a corresponding filtration of the center
\[
Z(\TL_n(\delta))=Z_{n,0}\supseteq Z_{n,1}\supseteq \cdots \supseteq
Z_{n,\lfloor n/2\rfloor}\supseteq Z_{n,\lfloor n/2\rfloor+1}=0.
\]
If \(z\in Z_{n,p}\setminus Z_{n,p+1}\), we write \(z_{(p)}\) for its top filtration term, and we denote its class in the quotient \(Z_{n,p}/Z_{n,p+1}\) by \(\ov z=z+Z_{n,p+1}.\)

The representations of a general cellular algebra are realized axiomatically by its standard modules. For \(\TL_n(\delta)\), we will follow \cite{RidoutSaintAubin} and refer to these as standard modules. These modules have a diagrammatic description in terms of half-diagrams. 
A half-diagram on \(n\) vertices is a diagram consisting of \(n\) marked points on a horizontal line together with a collection of nonintersecting cups drawn below the line. A point not incident on a cup is called a \emph{defect}. If there are \(p\) cups, then there are \(n-2p\) defects. Let \(H_{n,p}\) denote the set of all half-diagrams with \(p\) cups and \(n-2p\) defects. For example,
\[
\begin{tikzpicture}[scale = 0.35,thick, baseline={(0,-1ex/2)}]
	\tikzstyle{vertex} = [shape = circle, minimum size = 4pt, inner sep = 1pt, fill=black]
	
	\node[vertex] (G-1) at (0.0, 1)   [shape = circle, draw] {};
	\node[vertex] (G-2) at (1.5, 1)   [shape = circle, draw] {};
	\node[vertex] (G-3) at (3.0, 1)   [shape = circle, draw] {};
	\node[vertex] (G-4) at (4.5, 1)   [shape = circle, draw] {};
	\node[vertex] (G-5) at (6.0, 1)   [shape = circle, draw] {};
	\node[vertex] (G-6) at (7.5, 1)   [shape = circle, draw] {};
	\node[vertex] (G-7) at (9.0, 1)   [shape = circle, draw] {};
	\node[vertex] (G-8) at (10.5, 1)  [shape = circle, draw] {};
	
	\draw[] (G-1) -- (0.0,0);
	\draw[] (G-6) -- (7.5,0);
	
	\draw[] (G-2) .. controls +(1.5, -1.5) and +(-1.5, -1.5) .. (G-5);
	\draw[] (G-3) .. controls +(0.5, -0.5) and +(-0.5, -0.5) .. (G-4);
	\draw[] (G-7) .. controls +(0.5, -0.5) and +(-0.5, -0.5) .. (G-8);
	
\end{tikzpicture}
\]
is an element of \(H_{8,3}\).

For each \(p\), let \(\mathcal M_{n,p}\) denote the linear span of all half-diagrams in
\[
\bigcup_{p'\ge p}H_{n,p'}.
\]
There is a decreasing filtration
\[
\mathcal M_{n,0}\supseteq \mathcal M_{n,1}\supseteq \cdots \supseteq \mathcal M_{n,\lfloor n/2\rfloor}\supseteq \mathcal M_{n,\lfloor n/2\rfloor+1}=0
\]
and the standard module is defined to be the quotient
\[
V_{n,p}=\mathcal M_{n,p}/\mathcal M_{n,p+1}.
\]
We identify a half-diagram \(\pi\in \HH\) with its class in \(\VV\).
The action of \(\TL_n(\delta)\) on \(\VV\) is induced by a stacking procedure.
For \(d\in \TL_n(\delta)\) and \(\pi\in H_{n,p}\), stack \(d\)
on top of \(\pi\). Then
\[
d\cdot \pi=
\begin{cases}
	\delta^N\pi' & \text{if stacking produces a half-diagram \(\pi'\in \HH\),}\\
	0 & \text{if stacking produces more than \(p\) cups,}
\end{cases}
\]
where \(N\) is the number of closed loops formed in the stacking. Thus each
closed loop is removed and replaced by a factor of \(\delta\).

The one-dimensional trivial module \(T_0\) and its embeddings in the various \(\VV\) will play an important role in determining the structure of the center \(Z(\TL_n(\delta))\). Specifically, \(T_0\) is the one-dimensional \(\TL_n(\delta)\)-module which is sent to zero under the action of all \(e_i\). Thus, if \(\pi_0\) is a generator of \(T_0\) then \(e_i\cdot \pi_0=0\) for all \(i=1,\ldots, n-1\).  As an example, when \(n=3\) and \(\delta = 1\) (\( \Leftrightarrow q=e^{\pm 2\pi i/6}\)) it is easy to check that the submodule of \(V_{3,1}\) spanned by the element
\begin{equation}\label{trivialn=3}
	\setlength{\abovedisplayskip}{0pt}
	\setlength{\belowdisplayskip}{0pt}
	\begin{tikzpicture}[
		scale = 0.35, 
		thick, 
		baseline={(G-1.base)},
		vertex/.style={shape = circle, draw, minimum size = 4pt, inner sep = 1pt, fill=black}
		]
		\node[vertex] (G-1) at (0.0, 1) {};
		\node[vertex] (G-2) at (1.5, 1) {};
		\node[vertex] (G-3) at (3.0, 1) {};
		
		\draw[] (G-1) .. controls +(0.5, -0.5) and +(-0.5, -0.5) .. (G-2);
		\draw[] (G-3) -- (3.0, 0);
		
		\node[anchor=base] at (4.5, 1) {\(-\)};
		
		\node[vertex] (H-1) at (6.0, 1) {};
		\node[vertex] (H-2) at (7.5, 1) {};
		\node[vertex] (H-3) at (9.0, 1) {};
		
		\draw[] (H-1) -- (6.0, 0);
		\draw[] (H-2) .. controls +(0.5, -0.5) and +(-0.5, -0.5) .. (H-3);
	\end{tikzpicture}
\end{equation}
is an embedding of the trivial module \(T_0\) into \(V_{3,1}\). Thus as predicted by the general theory, the algebra \(\TL_3(1)\) is not semisimple as its standard module \(V_{3,1}\) has a proper submodule.

There is a canonical bilinear form 
\[
\langle\ ,\ \rangle:V_{n,p}\times V_{n,p}\to \Bbbk,
\]
on each module \(V_{n,p}\). It suffices to define the form on half-diagrams \(x,y\). To compute \(\langle x,y\rangle\), reflect \(x\) across a horizontal line,
place the result below \(y\), and connect the corresponding boundary points.
If every defect of \(x\) is connected with a defect of \(y\), then the resulting
closed diagram consists of vertical strands and some number \(r\) of closed loops, and
\[
\langle x,y\rangle=\delta^r.
\]
If the defects are not all connected, then \(\langle x,y\rangle=0\). See \cite[\S 2--\S 3]{RidoutSaintAubin} and
\cite[\S 8]{DotyGiaquinto}. 

As the bilinear form is symmetric, the left and right radicals coincide. The radical \(R_{n,p}\) of the bilinear form is defined by
\[
R_{n,p}=\{\,x\in V_{n,p}:\langle x,y\rangle=0\text{ for all }y\in V_{n,p}\,\}
\]
and is a \(\TL_n(\delta)\)-submodule of \(V_{n,p}\).

We will use the following consequences of the cellular structure of \(\TL_n(\delta)\).

\begin{lem}\label{lem:cellularfacts}
	Let \(V_{n,p}\) be the standard module for \(\TL_n(\delta)\), and let
	\(R_{n,p}\) be the radical of its bilinear form.
	
	\begin{enumerate}
		\item
		If the bilinear form on \(V_{n,p}\) is nonzero, then \(R_{n,p}\) is the unique
		maximal submodule of \(V_{n,p}\). In particular,
		\[
		L_{n,p}:=V_{n,p}/R_{n,p}
		\]
		is simple whenever it is nonzero.
		
		\item
		The nonzero modules \(L_{n,p}=V_{n,p}/R_{n,p}\), as \(p\) varies, give a
		complete set of simple \(\TL_n(\delta)\)-modules.
		
		\item
		If \(\Bbbk\) has characteristic \(0\), then \(R_{n,p}\) is either zero or
		irreducible.
		
	\item
	If \(\Bbbk\) has characteristic \(0\) and \(R_{n,p}\) is nonzero and not
	isomorphic to the trivial module \(T_0\), then \(V_{n,p}\) contains no
	submodule isomorphic to \(T_0\).
	\end{enumerate}
\end{lem}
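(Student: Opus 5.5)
Parts (1) and (2) are general facts about cellular algebras, so I will specialize the Graham--Lehrer arguments rather than rebuild them. For (1), take \(x\in V_{n,p}\setminus R_{n,p}\) and pick \(y\) with \(\langle y,x\rangle\neq 0\). The key identity is the cellular formula
\[
(u\,y^{*})\cdot x=\langle y,x\rangle\,u \qquad (u,y\in H_{n,p}),
\]
where \(u\,y^{*}\) is the diagram with top half \(u\) and bottom half the reflection of \(y\). This follows directly from the stacking rule, because the caps of \(y^{*}\) close against the cups of \(x\) into loops exactly when the defects match. Using it, any submodule containing \(x\) contains every \(u\in H_{n,p}\), so it is all of \(V_{n,p}\). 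Hence every proper submodule lies in \(R_{n,p}\). The radical itself is proper because the form is nonzero, and it is a submodule by the invariance \(\langle a\cdot x,y\rangle=\langle x,\omega(a)\cdot y\rangle\). Therefore \(R_{n,p}\) is the unique maximal submodule and \(L_{n,p}\) is simple.

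For (2), a simple module \(S\) has some smallest \(p\) with \(\T_{n,p}S\neq 0\). Then \(S\) is a quotient of \(\T_{n,p}/\T_{n,p+1}\), which is isomorphic as a left module to a direct sum of copies of \(V_{n,p}\). Moreover the relevant form is nonzero, since otherwise \(\T_{n,p}\) would act by zero on \(S\) modulo \(\T_{n,p+1}\). So \(S\cong L_{n,p}\). Distinct \(p\) give non-isomorphic modules, because \(p\) is recovered as the least index whose ideal acts nonzero.

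For (3) and (4) I will quote the known structure theory of standard modules in characteristic zero, namely Goodman--Wenzl and Ridout--Saint-Aubin. If \(q\) is not a root of unity with \(\ell\le n\), the Gram determinant is nonzero and \(R_{n,p}=0\). Otherwise, write \(n-2p+1=k\ell+r\) with \(0\le r<\ell\). If \(r=0\), the module \(V_{n,p}\) is critical and irreducible. If \(r\neq 0\), there is a non-split short exact sequence
\[
0\to L_{n,p'}\to V_{n,p}\to L_{n,p}\to 0,
\]
where \(p'\) is the reflection of \(p\) across the nearest critical line, so \(R_{n,p}\cong L_{n,p'}\) is simple. That proves (3). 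The main obstacle is that this step really is a citation, since the full argument needs the Gram determinant formula and the Jones--Wenzl construction of homomorphisms between standard modules. I will state it as a consequence of \cite[Thm.~8.x]{RidoutSaintAubin} rather than reprove it.

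For (4), let \(U\subseteq V_{n,p}\) be a submodule with \(U\cong T_0\). If the form on \(V_{n,p}\) is nonzero, \(U\) is a proper submodule, so \(U\subseteq R_{n,p}\) by (1). By (3), \(R_{n,p}\) is irreducible, so \(U=R_{n,p}\), which gives \(R_{n,p}\cong T_0\) and contradicts the hypothesis. The form can vanish only in the degenerate case \(n=2p\) with \(\delta=0\), and that case is excluded because \(\delta\neq0\). So \(V_{n,p}\) contains no submodule isomorphic to \(T_0\).
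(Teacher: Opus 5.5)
Your proof of (4) is the paper's argument, and for (3) you, like the paper, rely on a citation to Ridout--Saint-Aubin. The real difference is in (1) and (2). The paper just cites Graham--Lehrer~\S3. You instead derive both facts directly from your identity $(u\,y^{*})\cdot x=\langle y,x\rangle u$ in $V_{n,p}$ and the splitting of $\T_{n,p}/\T_{n,p+1}$ into copies of $V_{n,p}$. That makes (1)--(2) self-contained for $\TL_n(\delta)$. Your check that the form never vanishes when $\delta\neq0$ (indeed $\langle x,x\rangle=\delta^p$ for every half-diagram $x$) also makes explicit a hypothesis the paper uses without comment when it invokes (a) in the proof of (d).

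Three points need repair.

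\emph{The direction of the filtration in (2).} The filtration is decreasing with $\T_{n,0}=\TL_n(\delta)$. So the smallest $p$ with $\T_{n,p}S\neq0$ is always $p=0$, and $S$ is a quotient of $\T_{n,0}/\T_{n,1}$ only if $\T_{n,1}S=0$; as written, the step fails. You need the \emph{largest} $p$ with $\T_{n,p}S\neq0$. Then $\T_{n,p+1}S=0$, $S=\T_{n,p}s$ for a suitable $s$, and $a\mapsto as$ factors through $\T_{n,p}/\T_{n,p+1}$. Likewise, $p$ is recovered from $L_{n,p}$ as the largest index whose ideal acts nonzero.

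\emph{Your summary of the cited structure in (3).} For $r\neq0$ the reflected index ($p'=p+r-\ell$ in the paper's normalization) can be negative. In that case $R_{n,p}=0$ and no non-split sequence exists; for instance $n=2$, $p=1$, $\delta=1$. The reflection is also across the adjacent critical line on the side of fewer cups, not the nearest one. For $n=4$, $p=2$, $\delta=1$, reflecting across the nearest line gives $p'=3>2$, whereas in fact $R_{4,2}\cong L_{4,0}$ is the trivial module. Neither slip changes the conclusion ``zero or irreducible'', but both should be stated correctly.

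\emph{The field of definition in (3).} This is the more substantive point. Ridout--Saint-Aubin work over $\C$, while the lemma is stated for an arbitrary field $\Bbbk$ of characteristic zero, where $q$ need not even lie in $\Bbbk$. The paper bridges this explicitly:
\begin{enumerate}
\item $\TL_n(\delta)$, $V_{n,p}$ and the form are defined over $\Q(\delta)$, which embeds in $\C$.
\item The radical is the kernel of the Gram matrix, so it commutes with extension of scalars.
\item Irreducibility over $\C$ then gives absolute irreducibility over $\Q(\delta)$, and absolute irreducibility persists over $\Bbbk$.
\end{enumerate}
You should add this step. The relevant citation is \cite[Theorem~7.2]{RidoutSaintAubin}, not an unspecified Theorem~8.x.
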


\begin{proof}
Parts (a) and (b) are standard facts for cellular algebras. Namely, the
radical of the cellular form on a standard module is the unique maximal submodule
when the form is nonzero, the quotient by this radical is simple whenever it
is nonzero, and the nonzero such quotients give a complete set of simple
modules; see Graham--Lehrer \cite[\S 3]{GrahamLehrer}.
	
	For part (c), over \(\C\), Ridout and Saint-Aubin prove that the radical of a
	standard module is either zero or irreducible
	\cite[Theorem~7.2]{RidoutSaintAubin}. Since \(\TL_n(\delta)\), \(V_{n,p}\), and
	the bilinear form are all defined over \(\Q(\delta)\), and since the radical is
	the kernel of this form, the same irreducibility conclusion carries over to any
	field of characteristic \(0\).
	
Lastly, for (d), suppose \(T_0\) embeds in \(V_{n,p}\). Then it is a proper submodule and by
part (a), \(T_0\subseteq R_{n,p}\). It now follows from part (c) that \(T_0=R_{n,p}\), a contradiction. The proof is complete.
\end{proof}

\section{Dilations and compressions}

The inductive construction of \(Z(\TL_n(\delta))\) relies on two related maps: a dilation map from the Temperley--Lieb algebra at level \(n-2\) to level \(n\), and a compression map from level \(n\) to level \(n-2\).

Fix \(n\ge 3\) and \(1\le i\le n-1\).

	\begin{defn}
		The \(i\)th dilation map
		\[
		D_i:\TL_{n-2}(\delta)\longrightarrow \TL_n(\delta)
		\]
		is defined diagrammatically as follows. Given a diagram in
		\(\TL_{n-2}(\delta)\), insert two consecutive new nodes in positions
		\(i,i+1\) on both the top and bottom rows and shift the old nodes
		\(i,i+1,\ldots,n-2\) two positions to the right. Connect the two
		new top nodes by a cup and the two new bottom nodes by a cap, and leave all
		other connections unchanged.
\end{defn}
For example, the effect of \(D_3\) on a diagram is shown below.

\[
\begin{tikzpicture}[scale = .42,thick, baseline={(0,-1ex/2)}] 
	\tikzstyle{vertex} = [shape = circle, minimum size = 4pt, inner sep = 1pt,
	fill=black] 
	\node[vertex] (G--6) at (7.5, -1) [shape = circle, draw] {}; 
	\node[vertex] (G--5) at (6.0, -1) [shape = circle, draw] {}; 
	\node[vertex] (G--2) at (1.5, -1) [shape = circle, draw] {}; 
	\node[vertex] (G--4) at (4.5, -1) [shape = circle, draw] {}; 
	\node[vertex] (G--3) at (3.0, -1) [shape = circle, draw] {}; 
	\node[vertex] (G--1) at (0.0, -1) [shape = circle, draw] {}; 
	\node[vertex] (G-1) at (0.0, 1) [shape = circle, draw] {}; 
	\node[vertex] (G-2) at (1.5, 1) [shape = circle, draw] {}; 
	\node[vertex] (G-3) at (3.0, 1) [shape = circle, draw] {}; 
	\node[vertex] (G-4) at (4.5, 1) [shape = circle, draw] {}; 
	\node[vertex] (G-5) at (6.0, 1) [shape = circle, draw] {}; 
	\node[vertex] (G-6) at (7.5, 1) [shape = circle, draw] {}; 
	\draw[] (G--6) .. controls +(-0.5, 0.5) and +(0.5, 0.5) .. (G--5); 
	\draw[] (G-6) .. controls +(-1, -1) and +(1, 1) .. (G--4); 
	\draw[] (G--3) .. controls +(-0.5, 0.5) and +(0.5, 0.5) .. (G--2); 
	\draw[] (G-1) .. controls +(0, -1) and +(0, 1) .. (G--1); 
	\draw[] (G-2) .. controls +(1, -1) and +(-1, -1) .. (G-5); 
	\draw[] (G-3) .. controls +(0.5, -0.5) and +(-0.5, -0.5) .. (G-4); 
\end{tikzpicture}
\qquad \xlongrightarrow{D_3} \qquad
\begin{tikzpicture}[scale = .42,thick, baseline={(0,-1ex/2)}] 
	\tikzstyle{vertex} = [shape = circle, minimum size = 4pt, inner sep = 1pt,
	fill=black] 
	\node[vertex] (G--8) at (10.5, -1) [shape = circle, draw] {}; 
	\node[vertex] (G--7) at (9.0, -1) [shape = circle, draw] {}; 
	\node[vertex] (G--6) at (7.5, -1) [shape = circle, draw] {}; 
	\node[vertex] (G-8) at (10.5, 1) [shape = circle, draw] {}; 
	\node[vertex] (G--5) at (6.0, -1) [shape = circle, draw] {}; 
	\node[vertex] (G--2) at (1.5, -1) [shape = circle, draw] {}; 
	\node[vertex] (G--4) at (4.5, -1) [shape = circle, draw] {}; 
	\node[vertex] (G--3) at (3.0, -1) [shape = circle, draw] {}; 
	\node[vertex] (G--1) at (0.0, -1) [shape = circle, draw] {}; 
	\node[vertex] (G-1) at (0.0, 1) [shape = circle, draw] {}; 
	\node[vertex] (G-2) at (1.5, 1) [shape = circle, draw] {}; 
	\node[vertex] (G-7) at (9.0, 1) [shape = circle, draw] {}; 
	\node[vertex] (G-3) at (3.0, 1) [shape = circle, draw] {}; 
	\node[vertex] (G-4) at (4.5, 1) [shape = circle, draw] {}; 
	\node[vertex] (G-5) at (6.0, 1) [shape = circle, draw] {}; 
	\node[vertex] (G-6) at (7.5, 1) [shape = circle, draw] {}; 
	\draw[] (G--8) .. controls +(-0.5, 0.5) and +(0.5, 0.5) .. (G--7); 
	\draw[] (G-8) .. controls +(-1, -1) and +(1, 1) .. (G--6); 
	\draw[] (G--5) .. controls +(-0.8, 0.8) and +(0.8, 0.8) .. (G--2); 
	\draw[red] (G--4) .. controls +(-0.5, 0.5) and +(0.5, 0.5) .. (G--3); 
	\draw[] (G-1) .. controls +(0, -1) and +(0, 1) .. (G--1); 
	\draw[] (G-2) .. controls +(1, -1) and +(-1, -1) .. (G-7); 
	\draw[red] (G-3) .. controls +(0.5, -0.5) and +(-0.5, -0.5) .. (G-4); 
	\draw[] (G-5) .. controls +(0.5, -0.5) and +(-0.5, -0.5) .. (G-6); 
\end{tikzpicture}\]
The red connections indicate the inserted cup-cap pair.

\begin{lem}\label{lem:dilationbasic}
The dilation map \(D_i\) satisfies the following properties.
\begin{enumerate}
\item \(D_i(x)e_i=e_iD_i(x)=\delta D_i(x)\) for all \(x\in \TL_{n-2}(\delta)\).
\item \(D_i(x_1)D_i(x_2)=\delta D_i(x_1x_2)\) for all \(x_1,x_2\in \TL_{n-2}(\delta)\).
\item If \(D_i'=\delta^{-1}D_i\), then \(D_i':\TL_{n-2}(\delta)\to e_i\TL_n(\delta)e_i\) is an algebra isomorphism.
\item For \(1\le j\le n-3\),
\[
D_i(e_j)=
\begin{cases}
e_ie_{j+2} & \text{if } i\le j,\\
e_{j+1}e_je_{j+2}e_{j+1} & \text{if } i=j+1,\\
e_ie_j & \text{if } i\ge j+2.
\end{cases}
\]
\end{enumerate}
\end{lem}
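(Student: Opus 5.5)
The plan is to prove all four parts diagrammatically. Parts (a)--(c) follow from local pictures near positions \(i,i+1\), and part (d) is a direct identification of diagrams. Since \(D_i\) is linear, it suffices to check (a) and (b) on basis diagrams \(x,x_1,x_2\) of \(\TL_{n-2}(\delta)\).

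For (a), stack \(e_i\) beneath \(D_i(x)\). The cap of \(D_i(x)\) at bottom positions \(i,i+1\) meets the cup of \(e_i\) at top positions \(i,i+1\), closing a loop and giving a factor \(\delta\). Every other strand of \(e_i\) is vertical, so the resulting diagram is \(D_i(x)\) again. The identity \(e_iD_i(x)=\delta D_i(x)\) is the vertically reflected argument.

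For (b), stack \(D_i(x_1)\) on top of \(D_i(x_2)\). The inserted cap of the upper diagram and the inserted cup of the lower diagram form one closed loop, giving a factor \(\delta\). Because the inserted arcs are nested only with themselves and no other strand passes through positions \(i,i+1\) in the middle row, the remaining strands concatenate exactly as in \(x_1x_2\). They also produce the same number of closed loops as \(x_1x_2\). Hence \(D_i(x_1)D_i(x_2)=\delta D_i(x_1x_2)\).

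For (c), part (b) gives \(D_i'(x_1)D_i'(x_2)=\delta^{-2}\cdot\delta D_i(x_1x_2)=D_i'(x_1x_2)\), and part (a) shows the image lies in \(e_i\TL_n e_i\). The unit is preserved: \(D_i'(1)=\delta^{-1}e_i\), which is the identity of \(e_i\TL_ne_i\) because \((\delta^{-1}e_i)^2=\delta^{-1}e_i\).

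Injectivity is clear, since \(D_i\) sends distinct diagrams to distinct diagrams. For surjectivity, note that \(e_i\TL_ne_i\) is spanned by the diagrams \(e_ide_i\). Each such product is a scalar multiple of a diagram with a cup at top \(i,i+1\) and a cap at bottom \(i,i+1\). Deleting that cup and cap inverts the dilation, so every such diagram lies in the image of \(D_i\). This step is the main place where some care is needed, but it is routine planarity.

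For (d), compare diagrams case by case. If \(i\le j\), the cup-cap of \(e_j\) sits at old positions \(j,j+1\), which become \(j+2,j+3\) after the shift. The diagram \(D_i(e_j)\) therefore has cup-caps at \(i,i+1\) and at \(j+2,j+3\) with all other strands vertical, and this is \(e_ie_{j+2}\), a product of commuting generators.

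If \(i\ge j+2\), the old positions \(j,j+1\) are unshifted, so \(D_i(e_j)=e_je_i=e_ie_j\).

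If \(i=j+1\), the new pair is inserted between old \(j\) and \(j+1\). The diagram then has a nested cup \(\{j,j+3\}\supset\{j+1,j+2\}\) on top, the same nested cap on the bottom, and vertical strands elsewhere. Drawing the four-factor product \(e_{j+1}e_je_{j+2}e_{j+1}\) on the strands \(j,\dots,j+3\) gives exactly this nested cup-cap diagram with no closed loops. I expect this product verification to be the only real computation, and I would confirm it by a direct picture on four strands.
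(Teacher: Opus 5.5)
Your proposal is correct and takes essentially the paper's route: (a), (b) and (d) are direct diagram computations on basis diagrams, with the single inserted loop supplying the factor \(\delta\), and (c) is the normalization of (b). Your version gives more detail than the paper's for (c), since you explicitly check that \(D_i'\) is a bijection onto \(e_i\TL_n(\delta)e_i\) and sends \(1\) to the unit \(\delta^{-1}e_i\) of that corner algebra.
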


\begin{proof}
Each statement is a direct diagram calculation. In the products in (a) and (b), exactly one closed loop is created, and this contributes the factor \(\delta\). Part (c) is just the normalization of part (b). Part (d) is checked on generators by direct diagram calculation.
\end{proof}

\begin{rmk}\label{dilation-isomorphism}
	The dilation \(D_i\) is not an algebra isomorphism, but it is a linear isomorphism  onto \(e_i\TL_n(\delta)e_i\). By the preceding lemma, the normalized dilation
	\[
	D_i'=\delta^{-1}D_i:\TL_{n-2}(\delta)\longrightarrow e_i\TL_n(\delta)e_i
	\]
	is an algebra isomorphism, and so we can identify 
	\(e_i\TL_n(\delta)e_i\) with \(\TL_{n-2}(\delta)\). The unit of \(e_i\TL_n(\delta)e_i\) is
	\(\delta^{-1}e_i\), so this copy of \(\TL_{n-2}(\delta)\) is not a unital subalgebra of \(\TL_n(\delta)\). However, it does follow that if \(y\) is central in \(\TL_{n-2}(\delta)\) then \(D_i(y)\) is central in
	\(e_i\TL_n(\delta)e_i\).
\end{rmk}

The next lemma regarding dilation at adjacent positions is an essential tool that will be used later.

\begin{lem}\label{lem:dilationadjacent}
	For \(x\in \TL_{n-2}(\delta)\) and \(1\le i,j\le n-1\) with \(|i-j|=1\), one has
	\[
	e_jD_i(x)e_j=D_j(x).
	\]
\end{lem}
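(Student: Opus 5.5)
It suffices to check the identity on a single basis diagram \(x\) of \(\TL_{n-2}(\delta)\), since both sides are linear in \(x\). I will treat the case \(j=i+1\) in detail and obtain \(j=i-1\) from it by the left–right reflection of diagrams. The reflection \(\sigma\), sending a diagram to its mirror image across a vertical line, is an automorphism with \(\sigma(e_k)=e_{n-k}\). It satisfies \(\sigma D_i=D_{n-i}\sigma\), where the \(\sigma\) on the right is the reflection on \(\TL_{n-2}(\delta)\); this holds because the inserted pair at positions \(i,i+1\) is carried to positions \(n-i,n-i+1\). Applying \(\sigma\) to the identity for \((i,i+1)\) therefore gives it for \((n-i,n-i-1)\), which covers all pairs with \(j=i-1\).

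For \(j=i+1\), I compute \(e_{i+1}D_i(x)e_{i+1}\) by stacking: \(e_{i+1}\) on top, then \(D_i(x)\), then \(e_{i+1}\) on the bottom. Only the strands at positions \(i,i+1,i+2\) are affected; every other strand of \(D_i(x)\) passes through the outer \(e_{i+1}\)'s unchanged.

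On the top side, \(D_i(x)\) has a cup joining \(i\) and \(i+1\). The original node of \(x\) at position \(i\) now sits at position \(i+2\) and is attached to some strand \(s\). The cap of the upper \(e_{i+1}\) joins positions \(i+1\) and \(i+2\) of \(D_i(x)\). Tracing from top position \(i\) of the result, the path goes straight down through \(e_{i+1}\) to node \(i\) of \(D_i(x)\), along the cup to \(i+1\), up into the cap of \(e_{i+1}\) to \(i+2\), and then down along \(s\). No closed loop is formed, because the path is a zigzag. The new cup of the top \(e_{i+1}\) at positions \(i+1,i+2\) becomes a cup in the result. The net effect on the top is that the inserted cup now occupies positions \(i+1,i+2\), and the old strand \(s\) is attached at position \(i\), exactly where it was in \(x\).

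The bottom side is the mirror image of this computation. Together, the result is the diagram obtained from \(x\) by inserting a cup–cap pair at positions \(i+1,i+2\) with all other connections preserved, which is \(D_{i+1}(x)\). Since no loops are created, the coefficient is \(1\), which matches the absence of a factor of \(\delta\) in the statement.

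The main obstacle is keeping the positional bookkeeping correct. In particular, one must check that when \(s\) is itself a cup of \(x\) with an endpoint at position \(i\), or a through-strand, the zigzag still simply reattaches it at position \(i\) and produces no loop. This can be confirmed by noting that the zigzag is a local move and so is independent of what \(s\) connects to. As an alternative check, one can verify the identity on generators using Lemma~\ref{lem:dilationbasic}(d) together with the Temperley--Lieb relations, such as \(e_{i+1}e_ie_{i+1}=e_{i+1}\) for \(x=1\), and then extend to products via the multiplicativity in Lemma~\ref{lem:dilationbasic}(b). The direct diagrammatic argument is cleaner, however.
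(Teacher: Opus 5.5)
Your proposal is correct and follows essentially the paper's argument: reduce to a basis diagram \(x\), then check by direct diagram calculation that conjugating \(D_i(x)\) by the adjacent generator \(e_j\) slides the inserted cup--cap pair to positions \(j,j+1\) through a zigzag, with no closed loop formed. You make the strand-tracing explicit where the paper leaves it implicit, and you obtain the case \(j=i-1\) from \(j=i+1\) using the reflection identity \(\sigma D_i=D_{n-i}\sigma\), which is valid and covers exactly the pairs needed.
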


\begin{proof}
	The claim is verified by direct diagram calculation. Since both sides are linear in \(x\), it is enough to check the identity on
	the diagram basis of \(\TL_{n-2}(\delta)\).  For a basis diagram \(x\), multiplying \(D_i(x)\) on the left and right by an adjacent generator \(e_j\) moves the inserted cup-cap pair from the position
	\(i\) to the adjacent position \(j\), and creates no additional scalar factor. Thus we have the equality \(e_jD_i(x)e_j=D_j(x).\)
\end{proof}

The dilation \(D_i\) is not invertible as a map into all of
\(\TL_n(\delta)\) since it is not surjective. However, it is invertible as a map onto its image \(e_i\TL_n(\delta)e_i\). This gives a map on all of
\(\TL_n(\delta)\) by first sending \(x\) to \(e_i x e_i\) and then applying
\(D_i^{-1}\). We formalize this in the following definition.

\begin{defn}
For \(1\le i\le n-1\), the \(i\)th compression map
\[
C_i:\TL_n(\delta)\longrightarrow \TL_{n-2}(\delta)
\]
is defined by
\[
C_i(d)=D_i^{-1}(e_i d e_i)
\]
for a diagram \(d\in\TL_n(\delta)\), and then extended linearly.
\end{defn}

The following example illustrates compression.

\[\setlength{\arraycolsep}{2pt}\begin{array}{ccccc}
	&\begin{tikzpicture}[scale = .42,thick, baseline={(0,-1ex/2)}] 
		\tikzstyle{vertex} = [shape = circle, minimum size = 4pt, inner sep = 1pt,
		fill=black] 
		\node[vertex] (G--6) at (7.5, -1) [shape = circle, draw] {};  
		\node[vertex] (G--5) at (6.0, -1) [shape = circle, draw] {}; 
		\node[vertex] (G--2) at (1.5, -1) [shape = circle, draw] {}; 
		\node[vertex] (G--4) at (4.5, -1) [shape = circle, draw] {}; 
		\node[vertex] (G--3) at (3.0, -1) [shape = circle, draw] {}; 
		\node[vertex] (G--1) at (0.0, -1) [shape = circle, draw] {}; 
		\node[vertex] (G-1) at (0.0, 1) [shape = circle, draw] {}; 
		\node[vertex] (G-2) at (1.5, 1) [shape = circle, draw] {};  
		\node[vertex] (G-3) at (3.0, 1) [shape = circle, draw] {}; 
		\node[vertex] (G-4) at (4.5, 1) [shape = circle, draw] {}; 
		\node[vertex] (G-5) at (6.0, 1) [shape = circle, draw] {}; 
		\node[vertex] (G-6) at (7.5, 1) [shape = circle, draw] {}; 
		\draw[] (G--4) .. controls +(-0.5, 0.5) and +(0.5, 0.5) .. (G--3);
		\draw[] (G-3) .. controls +(0.5, -0.5) and +(-0.5, -0.5) .. (G-4); 
		\draw[] (G-1) .. controls +(0, -1) and +(0, 1) .. (G--1);
		\draw[] (G-2) .. controls +(0, -1) and +(0, 1) .. (G--2);
		\draw[] (G-5) .. controls +(0, -1) and +(0, 1) .. (G--5);
		\draw[] (G-6) .. controls +(0, -1) and +(0, 1) .. (G--6); 
	\end{tikzpicture}&&&\\
	e_3de_3\quad =& \begin{tikzpicture}[scale = .42,thick, baseline={(0,-1ex/2)}] 
		\tikzstyle{vertex} = [shape = circle, minimum size = 4pt, inner sep = 1pt,
		fill=black] 
		\node[vertex] (G--6) at (7.5, -1) [shape = circle, draw] {}; 
		\node[vertex] (G--5) at (6.0, -1) [shape = circle, draw] {}; 
		\node[vertex] (G--2) at (1.5, -1) [shape = circle, draw] {}; 
		\node[vertex] (G--4) at (4.5, -1) [shape = circle, draw] {}; 
		\node[vertex] (G--3) at (3.0, -1) [shape = circle, draw] {}; 
		\node[vertex] (G--1) at (0.0, -1) [shape = circle, draw] {}; 
		\node[vertex] (G-1) at (0.0, 1) [shape = circle, draw] {}; 
		\node[vertex] (G-2) at (1.5, 1) [shape = circle, draw] {}; 
		\node[vertex] (G-3) at (3.0, 1) [shape = circle, draw] {}; 
		\node[vertex] (G-4) at (4.5, 1) [shape = circle, draw] {}; 
		\node[vertex] (G-5) at (6.0, 1) [shape = circle, draw] {}; 
		\node[vertex] (G-6) at (7.5, 1) [shape = circle, draw] {}; 
		\draw[] (G--6) .. controls +(-0.5, 0.5) and +(0.5, 0.5) .. (G--5); 
		\draw[] (G-6) .. controls +(-1, -1) and +(1, 1) .. (G--4); 
		\draw[] (G--3) .. controls +(-0.5, 0.5) and +(0.5, 0.5) .. (G--2); 
		\draw[] (G-1) .. controls +(0, -1) and +(0, 1) .. (G--1); 
		\draw[] (G-2) .. controls +(1, -1) and +(-1, -1) .. (G-5); 
		\draw[] (G-3) .. controls +(0.5, -0.5) and +(-0.5, -0.5) .. (G-4); 
	\end{tikzpicture}&&\qquad  \xrightarrow{\scriptstyle D_3^{-1}}  \qquad
	\begin{tikzpicture}[scale = .42,thick, baseline={(0,-1ex/2)}] 
		\tikzstyle{vertex} = [shape = circle, minimum size = 4pt, inner sep = 1pt,
		fill=black] 
		\node[vertex] (G--2) at (1.5, -1) [shape = circle, draw] {}; 
		\node[vertex] (G--4) at (4.5, -1) [shape = circle, draw] {}; 
		\node[vertex] (G--3) at (3.0, -1) [shape = circle, draw] {}; 
		\node[vertex] (G--1) at (0.0, -1) [shape = circle, draw] {}; 
		\node[vertex] (G-1) at (0.0, 1) [shape = circle, draw] {}; 
		\node[vertex] (G-2) at (1.5, 1) [shape = circle, draw] {};  
		\node[vertex] (G-3) at (3.0, 1) [shape = circle, draw] {}; 
		\node[vertex] (G-4) at (4.5, 1) [shape = circle, draw] {};  
		\draw[] (G--4) .. controls +(-0.5, 0.5) and +(0.5, 0.5) .. (G--3);
		\draw[] (G-2) .. controls +(0.5, -0.5) and +(-0.5, -0.5) .. (G-3); 
		\draw[] (G-1) .. controls +(0, -1) and +(0, 1) .. (G--1);
		\draw[] (G-4) .. controls +(-1, -1) and +(1, 1) .. (G--2); 
	\end{tikzpicture} & =\quad \delta^{-1} C_3(d)
	\\
	&\begin{tikzpicture}[scale = .42,thick, baseline={(0,-1ex/2)}] 
		\tikzstyle{vertex} = [shape = circle, minimum size = 4pt, inner sep = 1pt,
		fill=black] 
		\node[vertex] (G--6) at (7.5, -1) [shape = circle, draw] {};  
		\node[vertex] (G--5) at (6.0, -1) [shape = circle, draw] {}; 
		\node[vertex] (G--2) at (1.5, -1) [shape = circle, draw] {}; 
		\node[vertex] (G--4) at (4.5, -1) [shape = circle, draw] {}; 
		\node[vertex] (G--3) at (3.0, -1) [shape = circle, draw] {}; 
		\node[vertex] (G--1) at (0.0, -1) [shape = circle, draw] {}; 
		\node[vertex] (G-1) at (0.0, 1) [shape = circle, draw] {}; 
		\node[vertex] (G-2) at (1.5, 1) [shape = circle, draw] {};  
		\node[vertex] (G-3) at (3.0, 1) [shape = circle, draw] {}; 
		\node[vertex] (G-4) at (4.5, 1) [shape = circle, draw] {}; 
		\node[vertex] (G-5) at (6.0, 1) [shape = circle, draw] {}; 
		\node[vertex] (G-6) at (7.5, 1) [shape = circle, draw] {}; 
		\draw[] (G--4) .. controls +(-0.5, 0.5) and +(0.5, 0.5) .. (G--3);
		\draw[] (G-3) .. controls +(0.5, -0.5) and +(-0.5, -0.5) .. (G-4); 
		\draw[] (G-1) .. controls +(0, -1) and +(0, 1) .. (G--1);
		\draw[] (G-2) .. controls +(0, -1) and +(0, 1) .. (G--2);
		\draw[] (G-5) .. controls +(0, -1) and +(0, 1) .. (G--5);
		\draw[] (G-6) .. controls +(0, -1) and +(0, 1) .. (G--6); 
	\end{tikzpicture}
	&&&
\end{array}\]

\begin{rmk}\label{rmk:compression-not-algebra-map}
	The compression map \(C_i\) is a linear map, but it is not an algebra map.
	The reason is that the first step \(x\mapsto e_i x e_i\) is not multiplicative.
	What will be important later is the crucial fact that compression sends
	central elements to central elements, as proved in Lemma~\ref{lem:compressioncentral}.
\end{rmk}

\begin{lem}\label{lem:compressioncentral}
If \(z\in Z(\TL_n(\delta))\), then for each \(i\) the element \(C_i(z)=D_i^{-1}(e_i z e_i)\) is central in \(\TL_{n-2}(\delta)\).
\end{lem}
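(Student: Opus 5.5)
Since $D_i$ is a linear isomorphism onto $e_i\TL_n(\delta)e_i$ (Remark~\ref{dilation-isomorphism}), it suffices to show $D_i(C_i(z)y) = D_i(yC_i(z))$ for every $y\in\TL_{n-2}(\delta)$. Set $w=e_ize_i=D_i(C_i(z))$. By Lemma~\ref{lem:dilationbasic}(b) we have $D_i(C_i(z))D_i(y)=\delta D_i(C_i(z)y)$, and similarly for the other order. So the claim reduces to showing that $w$ commutes with $D_i(y)$ inside $\TL_n(\delta)$.

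For this we use the centrality of $z$. Since $z$ commutes with $e_i$, we get $w=e_ize_i=e_i^2z=\delta e_iz=\delta ze_i$. Moreover $D_i(y)$ lies in $e_i\TL_n(\delta)e_i$, and by Lemma~\ref{lem:dilationbasic}(a) it satisfies $e_iD_i(y)=D_i(y)e_i=\delta D_i(y)$. The computation is then
\[
wD_i(y)=\delta z e_i D_i(y)=\delta^2 zD_i(y)=\delta^2 D_i(y)z=\delta D_i(y)e_iz=D_i(y)w,
\]
using centrality of $z$ in the third equality and $D_i(y)e_i=\delta D_i(y)$ in the fourth.

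Hence $\delta D_i(C_i(z)y)=\delta D_i(yC_i(z))$. Since $\delta\neq 0$ and $D_i$ is injective, $C_i(z)y=yC_i(z)$ for all $y$, so $C_i(z)$ is central. An equivalent formulation: the normalized map $D_i'$ is an algebra isomorphism, and $e_ize_i=\delta e_i z$ is central in the corner algebra $e_i\TL_n(\delta)e_i$ because $e_i$ commutes with $z$. The only subtle point is remembering that this corner is not unital in $\TL_n(\delta)$; this is handled by the $\delta$ normalization, which requires $\delta\neq 0$. There is no real obstacle.
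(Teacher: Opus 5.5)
Your proof is correct and follows essentially the paper's argument: centrality of \(z\) makes \(e_ize_i\) commute with the corner \(e_i\TL_n(\delta)e_i\), and the dilation isomorphism then transfers this back to centrality of \(C_i(z)\), using \(\delta\neq 0\). The only difference is that you check the corner commutation explicitly via Lemma~\ref{lem:dilationbasic}(a),(b), whereas the paper states it without computation and phrases the transfer through the normalized map \(D_i'\).
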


\begin{proof}
Since \(z\) is central, \(e_i z e_i\) commutes with every element of the subalgebra \(e_i\TL_n(\delta)e_i\). The normalized dilation
\[
D_i'=\delta^{-1}D_i:\TL_{n-2}(\delta)\longrightarrow e_i\TL_n(\delta)e_i
\]
is an algebra isomorphism. Since
\[
e_i z e_i = D_i(C_i(z))=\delta D_i'(C_i(z)),
\]
it follows that \(\delta C_i(z)\) is central in \(\TL_{n-2}(\delta)\). Since
\(\delta\ne0\), the element \(C_i(z)\) is central.
\end{proof}

\begin{rmk}
	The maps \(D_i\) and \(C_i\) are different from some other standard
	inductive operations on Temperley--Lieb algebras. The usual inclusion obtained
	by adding vertical through-strands at the right end is not the dilation used
	here. Similarly, \(C_i\) is not the partial trace obtained
	by closing off top and bottom boundary points. 
\end{rmk}

\section{Central classes in trivial-radical case}

We treat in this section the case in which the radical of a standard module is
the trivial module. We first record how multiplication looks on a fixed
filtration piece.

Suppose \(d\) is a diagram with \(p\) cup-cap pairs. If \(d\) is cut
horizontally through the middle, it splits into a top half-diagram
\(\alpha\in H_{n,p}\) and a bottom inverted half-diagram. Reflecting the
bottom half gives another half-diagram \(\beta\in H_{n,p}\). We write \(
d=X_{\alpha,\beta}.\)
Conversely, any pair \(\alpha,\beta\in H_{n,p}\) determines such a diagram
\(X_{\alpha,\beta}\), by placing \(\alpha\) on top of a reflected \(\beta\)
and connecting defects from left to right. Thus the diagrams
\(X_{\alpha,\beta}\), with \(\alpha,\beta\in H_{n,p}\), form a basis of
\(\T_{n,p}/\T_{n,p+1}\). We extend the notation \(X_{\alpha,\beta}\) bilinearly when one or both
entries are linear combinations of half-diagrams.

In this notation, multiplication in the \(p\)th filtration quotient is given
by
\[
X_{\alpha,\beta}X_{\gamma,\eta}
\equiv
\langle \beta,\gamma\rangle X_{\alpha,\eta}
\pmod{\T_{n,p+1}}
\]
where \(\langle \beta,\gamma\rangle\) is the bilinear form on \(V_{n,p}\).

If \(y\in \T_{n,p}\), its \(p\)th filtration piece \(y_{(p)}\) is of the form
\[
y\equiv y_{(p)}
=
\sum_{\alpha,\beta\in H_{n,p}}
a_{\alpha,\beta}X_{\alpha,\beta}
\pmod{\T_{n,p+1}}
\]
for scalars \(a_{\alpha,\beta}\).

For \(\tau\in H_{n,p}\), define the lower-\(\tau\) part of \(y_{(p)}\) by
\[
y_\tau=\sum_{\alpha\in H_{n,p}}a_{\alpha,\tau}\alpha
\in V_{n,p}.
\]

\begin{lem}\label{lem:columnextraction}
	Let \(x\in \TL_n(\delta)\), let \(y\in \T_{n,p}\), and let
	\(\tau\in H_{n,p}\). Then
	\[
	(xy)_\tau=x\cdot y_\tau
	\]
	in \(V_{n,p}\).
\end{lem}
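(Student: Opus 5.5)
By linearity in both \(x\) and \(y\), it suffices to take \(x=d\) a single diagram and \(y=X_{\alpha,\beta}\) a single basis diagram with \(\alpha,\beta\in H_{n,p}\). Diagrams with more than \(p\) cups also need to be handled. Since \(\T_{n,p+1}\) is a two-sided ideal, \(d\,\T_{n,p+1}\subseteq \T_{n,p+1}\). So any component of \(y\) in \(\T_{n,p+1}\) contributes nothing to \((xy)_{(p)}\), and hence nothing to \((xy)_\tau\). Such a component is also invisible in \(y_\tau\). Thus we may assume \(y=X_{\alpha,\beta}\). Then \(y_\tau=\alpha\) if \(\beta=\tau\), and \(y_\tau=0\) otherwise.

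The key step is the diagrammatic identity
\[
d\,X_{\alpha,\beta}\equiv X_{d\cdot\alpha,\;\beta}\pmod{\T_{n,p+1}},
\]
where \(d\cdot\alpha\in V_{n,p}\) is the standard-module action and \(X_{\cdot,\beta}\) is extended linearly.

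To prove it, stack \(d\) on top of \(X_{\alpha,\beta}\) and cut the result horizontally at the original middle of \(X_{\alpha,\beta}\). The part below the cut is the reflected \(\beta\), which is untouched. The part above the cut is exactly \(d\) stacked on \(\alpha\), with its \(n-2p\) defects continuing downward as through-strands.

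There are two cases. If the stacking of \(d\) on \(\alpha\) yields a half-diagram \(\pi'\in H_{n,p}\) together with \(N\) closed loops, then the product is \(\delta^N X_{\pi',\beta}\). Here we use that the defects of \(\pi'\) are connected in left-to-right order to those of \(\beta\), by planarity. This agrees with \(d\cdot\alpha=\delta^N\pi'\). Otherwise the stacking joins two defects of \(\alpha\) into a new cup. The product diagram then has more than \(p\) cups, so it lies in \(\T_{n,p+1}\), matching \(d\cdot\alpha=0\). The bottom half cannot gain cups from \(d\), so the lower label stays \(\beta\).

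Given the identity, the coefficient of \(X_{\gamma,\tau}\) in \((dX_{\alpha,\beta})_{(p)}\) vanishes unless \(\beta=\tau\). When \(\beta=\tau\), these coefficients are the coefficients of \(d\cdot\alpha\) in the half-diagram basis. Hence \((dX_{\alpha,\beta})_\tau=\delta_{\beta,\tau}\, d\cdot\alpha=d\cdot y_\tau\). Linearity then finishes the proof.

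The main obstacle is the case analysis in the key step, namely justifying rigorously that:
\begin{enumerate}
\item the number of through-strands in the product never exceeds that of \(X_{\alpha,\beta}\);
\item it drops exactly when the action on \(V_{n,p}\) gives \(0\);
\item the loop count agrees with the one defining the module action.
\end{enumerate}
This is essentially the standard cellular compatibility, and I would argue it by tracking the strands of \(d\) meeting the top boundary of \(\alpha\).
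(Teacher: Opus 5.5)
Your proposal is correct and follows the same route as the paper. You reduce to a single basis diagram \(X_{\alpha,\beta}\) using that \(\T_{n,p+1}\) is an ideal, observe that left multiplication acts only on the upper half modulo \(\T_{n,p+1}\) (your identity \(d\,X_{\alpha,\beta}\equiv X_{d\cdot\alpha,\beta}\)), and read off the lower-\(\tau\) part; your case analysis of the stacking just spells out the diagrammatic justification that the paper's brief proof leaves implicit.
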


\begin{proof}
	Both sides only depend on the \(p\)th filtration piece \(y_{(p)}\), so by
	linearity it is enough to take \(y\) to be a single diagram \(X_{\alpha,\beta}\). Left multiplication by
	\(x\) changes only the upper half-diagram in the \(p\)th filtration quotient.
	The lower half remains \(\beta\). Hence the part with lower half \(\tau\) is
	zero unless \(\beta=\tau\), and in the case \(\beta=\tau\) it is exactly
	\(x\cdot\alpha\). This is precisely \(x\cdot (X_{\alpha,\beta})_\tau\).
\end{proof}

Now suppose that \(z\in Z_{n,p}\). We use the bookkeeping of the preceding
lemma to show that, in the trivial-radical case, the part of the \(p\)th filtration piece
\(z_{(p)}\) with fixed lower-\(\tau\) part must lie in the radical.

\begin{lem}\label{lem:trivialproper}
	Suppose \(p\ge 1\), let \(z\in Z_{n,p}\), and assume that \(R_{n,p}\) is the
	trivial module with generator \(\pi_0\). Then for every \(\tau_0\in H_{n,p}\),
	the submodule
	\[
	\TL_n(\delta)\cdot z_{\tau_0}
	\]
	is proper. In particular,
	\[
	z_{\tau_0}\in R_{n,p}.
	\]
\end{lem}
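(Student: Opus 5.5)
The plan is to exploit centrality of \(z\) against the diagrams \(X_{\gamma,\eta}\) spanning \(\T_{n,p}/\T_{n,p+1}\), working entirely modulo \(\T_{n,p+1}\). Using the multiplication rule \(X_{\alpha,\beta}X_{\gamma,\eta}\equiv\langle\beta,\gamma\rangle X_{\alpha,\eta}\), this turns the condition \(zX_{\gamma,\eta}=X_{\gamma,\eta}z\) into a matrix identity. That identity will show directly that every column \(z_\tau\) of \(z_{(p)}\) is annihilated by the bilinear form.

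Write \(A=(a_{\alpha,\beta})\) for the coefficient matrix of \(z_{(p)}\) and \(G=(\langle\beta,\gamma\rangle)\) for the (symmetric) Gram matrix of \(V_{n,p}\), both indexed by \(H_{n,p}\). Since \(z\in\T_{n,p}\) and \(X_{\gamma,\eta}\in\T_{n,p}\), both products lie in \(\T_{n,p}\), and we have modulo \(\T_{n,p+1}\)
\[
zX_{\gamma,\eta}\equiv\sum_{\alpha}(AG)_{\alpha,\gamma}X_{\alpha,\eta},\qquad
X_{\gamma,\eta}z\equiv\sum_{\beta}(GA)_{\eta,\beta}X_{\gamma,\beta}.
\]
Equivalently, in terms of the column notation and Lemma~\ref{lem:columnextraction}, the \(\eta\)-column of \(zX_{\gamma,\eta}\) is \(\sum_\beta\langle\beta,\gamma\rangle z_\beta\), while the \(\eta\)-column of \(X_{\gamma,\eta}z\) is \(\bigl(\sum_\alpha\langle\eta,\alpha\rangle a_{\alpha,\eta}\bigr)\gamma\).

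Comparing coefficients of the basis elements \(X_{\alpha,\beta}\) for all choices of \(\gamma,\eta\) gives
\[
(AG)_{\alpha,\gamma}\,\delta_{\beta,\eta}=\delta_{\alpha,\gamma}(GA)_{\eta,\beta}.
\]
From this I expect to deduce the following:
\begin{enumerate}
\item \((AG)_{\alpha,\gamma}=0\) for \(\alpha\ne\gamma\), by taking \(\beta=\eta\);
\item \((GA)_{\eta,\beta}=0\) for \(\eta\ne\beta\);
\item \((AG)_{\gamma,\gamma}=(GA)_{\eta,\eta}\) for all \(\gamma,\eta\).
\end{enumerate}
Hence \(AG=GA=\lambda I\) for a single scalar \(\lambda\). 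If \(|H_{n,p}|=1\), the index bookkeeping needs a small separate remark.

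Next I use the hypothesis that \(R_{n,p}\) is nonzero, being the trivial module. This means \(G\) is singular, so \(\lambda I=GA\) is singular and therefore \(\lambda=0\). Then \(GA=0\), which says exactly that \(\langle\gamma,z_\tau\rangle=0\) for every \(\gamma\in H_{n,p}\) and every \(\tau\). Thus \(z_{\tau_0}\in R_{n,p}\).

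Properness of \(\TL_n(\delta)\cdot z_{\tau_0}\) then follows, because this submodule is contained in the submodule \(R_{n,p}\). It remains to check that \(R_{n,p}\ne V_{n,p}\). If they were equal, \(V_{n,p}\) would be one-dimensional, and for \(p\ge1\) this happens only when \(n=2\) and \(p=1\). In that case the form is \(\langle\cup,\cup\rangle=\delta\ne0\), a contradiction.

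The main obstacle is the index bookkeeping in the comparison of coefficients, specifically making sure that \(zX_{\gamma,\eta}\) and \(X_{\gamma,\eta}z\) are expanded with the correct orientation of the form. Symmetry of \(\langle\ ,\ \rangle\) should make this harmless. The overall route is structured so that the trivial-radical hypothesis enters only through singularity of \(G\) and the properness check.
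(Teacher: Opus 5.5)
Your proof is correct, but it takes a different route from the paper's.

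The paper uses the trivial-radical hypothesis essentially. Since every diagram with a cup kills $\pi_0$, one has $z\cdot\pi_0=0$. Combining this with Lemma~\ref{lem:columnextraction} and centrality shows that $X_{\pi_0,\tau_0}$ annihilates all of $\TL_n(\delta)\cdot z_{\tau_0}$, while it does not annihilate $\tau_0$. This gives properness first. Membership in $R_{n,p}$ then comes from $R_{n,p}$ being the unique maximal submodule (Lemma~\ref{lem:cellularfacts}(a)).

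You reverse the order. Testing centrality against every cell basis element $X_{\gamma,\eta}$ gives $AG=GA=\lambda I$, i.e.\ $z$ acts on $V_{n,p}$ by the scalar $\lambda$. Singularity of $G$ then forces $\lambda=0$. This puts each $z_\tau$ in $R_{n,p}$ directly. Properness is then only the statement $R_{n,p}\ne V_{n,p}$, which you can see more simply from $\langle\tau,\tau\rangle=\delta^p\ne 0$ without counting half-diagrams. The separate remark for $|H_{n,p}|=1$ is not needed, since the identity holds trivially there.

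What your route buys: it uses only $R_{n,p}\ne 0$, not triviality, and it does not need the unique-maximal-submodule fact. It also gives $AG=0$, so the upper-half parts lie in the radical at the same time, making the $\omega$-step before Theorem~\ref{thm:trivialshape} unnecessary. Whenever $\dim R_{n,p}=1$, it yields $z_{(p)}=c\,X_{\pi_0,\pi_0}$ immediately. When $R_{n,p}=0$, it gives $A=\lambda G^{-1}$, recovering the leading-term formula of the last section. The paper's argument is more tailored: it shows concretely how the trivial module (via $z\cdot\pi_0=0$) forces the columns into the radical.
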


\begin{proof}
	Since \(z\in \T_{n,p}\) and \(p\ge 1\), every diagram appearing in \(z\) has at
	least one cup. Since \(R_{n,p}=\Bbbk\pi_0\) is trivial, every \(e_i\) kills
	\(\pi_0\). Hence every diagram with a cup kills \(\pi_0\), and so
	\[
	z\cdot \pi_0=0.
	\]
	
	Let \(y\in \TL_n(\delta)\). By Lemma~\ref{lem:columnextraction}, centrality of
	\(z\), and another use of Lemma~\ref{lem:columnextraction}, we have
	\[
	\begin{aligned}
		X_{\pi_0,\tau_0}\cdot (y\cdot z_{\tau_0})
		&=(X_{\pi_0,\tau_0}yz)_{\tau_0}\\
		&=(zX_{\pi_0,\tau_0}y)_{\tau_0}\\
		&=z\cdot (X_{\pi_0,\tau_0}y)_{\tau_0}.
	\end{aligned}
	\]
	Now \((X_{\pi_0,\tau_0}y)_{\tau_0}\) is a scalar multiple of \(\pi_0\). Indeed,
	in the \(p\)th filtration piece, the upper half is forced to be \(\pi_0\).
	Therefore the last expression is zero.
	
	Thus every element of \(\TL_n(\delta)\cdot z_{\tau_0}\) is killed by
	\(X_{\pi_0,\tau_0}\). But
	\[
	X_{\pi_0,\tau_0}\cdot \tau_0=\pi_0\ne 0.
	\]
	So \(X_{\pi_0,\tau_0}\) does not kill all of \(V_{n,p}\), and hence
	\(\TL_n(\delta)\cdot z_{\tau_0}\) is a proper submodule. Since \(R_{n,p}\) is
	the unique maximal submodule, this submodule is contained in \(R_{n,p}\).
	Thus \(z_{\tau_0}\in R_{n,p}\).
\end{proof}

There is an analogous statement for upper-half parts. Let \(\omega\) denote the
anti-involution of \(\TL_n(\delta)\) obtained by reflecting diagrams across a
horizontal axis. Thus
\[
\omega(X_{\alpha,\beta})=X_{\beta,\alpha}.
\]
Since \(\omega\) preserves the center, it follows from
Lemma~\ref{lem:trivialproper}, applied to \(\omega(z)\), that every
upper-half part of \(z_{(p)}\) also lies in \(R_{n,p}\).

\begin{thm}\label{thm:trivialshape}
	Suppose that \(R_{n,p}\) is the trivial module with generator \(\pi_0\), and let
	\[
	z\in Z_{n,p}\setminus Z_{n,p+1}.
	\]
	Then there exists \(c\in\Bbbk\) such that
	\[
	z_{(p)}=c\,X_{\pi_0,\pi_0}.
	\]
\end{thm}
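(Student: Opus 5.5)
The plan is to combine Lemma~\ref{lem:trivialproper} for lower-half parts with its analogue for upper-half parts obtained via $\omega$. This pins down both halves of every diagram that occurs in $z_{(p)}$.

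First I need $p\ge 1$ to invoke Lemma~\ref{lem:trivialproper}. If $p=0$, then $R_{n,0}$ would be the radical of $V_{n,0}$. This module is one-dimensional, spanned by the half-diagram with no cups, and its form is $\langle\pi,\pi\rangle=1\neq 0$. So $R_{n,0}=0$, which cannot be the trivial module, and the hypothesis forces $p\ge1$.

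Next, write
\[
z_{(p)}=\sum_{\alpha,\beta\in H_{n,p}}a_{\alpha,\beta}X_{\alpha,\beta}.
\]
By Lemma~\ref{lem:trivialproper}, for every $\tau\in H_{n,p}$ the lower part $z_\tau=\sum_\alpha a_{\alpha,\tau}\alpha$ lies in $R_{n,p}=\Bbbk\pi_0$. So there are scalars $b_\tau$ with $z_\tau=b_\tau\pi_0$. Here $\pi_0=\sum_\alpha p_\alpha\alpha$ is expanded in the half-diagram basis of $V_{n,p}$, and $a_{\alpha,\tau}=b_\tau p_\alpha$. Therefore
\[
z_{(p)}=\sum_\tau b_\tau X_{\pi_0,\tau}=X_{\pi_0,\beta'},\qquad \beta'=\sum_\tau b_\tau\tau\in V_{n,p},
\]
using the bilinear extension of the notation $X_{\alpha,\beta}$.

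For the upper-half parts, I apply the same lemma to $\omega(z)$. This is central because $\omega$ is an anti-automorphism, and it lies in $Z_{n,p}$ because $\omega$ preserves cup number. Since $\omega(z)_{(p)}=\sum a_{\alpha,\beta}X_{\beta,\alpha}$, its lower-$\tau$ part is $\sum_\beta a_{\tau,\beta}\beta$. Hence for each fixed $\alpha$, the row vector $\sum_\beta a_{\alpha,\beta}\beta$ is a multiple of $\pi_0$. Combining with the previous step, the coefficient matrix $(a_{\alpha,\beta})$ equals $p_\alpha b_\beta$, and each of its rows is proportional to $(p_\beta)$. Choose $\alpha$ with $p_\alpha\ne0$; this is possible since $\pi_0\ne0$. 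Then $(b_\beta)$ is proportional to $(p_\beta)$, so $\beta'=c\,\pi_0$, and $z_{(p)}=c\,X_{\pi_0,\pi_0}$ as claimed.

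The only delicate points are bookkeeping ones. One must confirm that $\omega$ preserves $Z_{n,p}$ and the filtration, which is clear diagrammatically, and that the identification of $z_{(p)}$ with an element of $V_{n,p}\otimes V_{n,p}$ lets a rank-one tensor whose row and column spaces both lie in $\Bbbk\pi_0$ be concluded to be a multiple of $\pi_0\otimes\pi_0$. This last step is elementary linear algebra. Finally, $c\neq0$ since $z\notin Z_{n,p+1}$, although the statement does not require it.
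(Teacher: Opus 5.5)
Your proof is correct and follows the paper's argument. You use Lemma~\ref{lem:trivialproper} to force every lower-half part into \(\Bbbk\pi_0\), apply the same lemma to \(\omega(z)\) to do the same for the upper-half parts, and then conclude from elementary linear algebra that the coefficient matrix is a multiple of \(p_\alpha p_\beta\). Your preliminary check that \(R_{n,0}=0\), so the hypothesis forces \(p\ge 1\) as Lemma~\ref{lem:trivialproper} requires, is a small point the paper leaves implicit.
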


\begin{proof}
	Write \(\pi_0=\sum_{\mu\in H_{n,p}}c_\mu\mu\) and
	\[
	z_{(p)}=\sum_{\tau,\mu\in H_{n,p}}a_{\tau,\mu}X_{\tau,\mu}.
	\]
	By Lemma~\ref{lem:trivialproper}, for each \(\mu\in H_{n,p}\) there is a
	scalar \(\kappa_\mu\in\Bbbk\) such that \(z_\mu=\kappa_\mu\pi_0\). Hence
	\(a_{\tau,\mu}=\kappa_\mu c_\tau\) for all \(\tau,\mu\in H_{n,p}\).
	
	Similarly, by the preceding paragraph, for each \(\tau\in H_{n,p}\) there is
	a scalar \(\kappa^\tau\in\Bbbk\) such that the upper-\(\tau\) part of \(z_{(p)}\) is \(\kappa^\tau\pi_0\). Hence
	\(a_{\tau,\mu}=\kappa^\tau c_\mu\) for all \(\tau,\mu\in H_{n,p}\).
	Therefore
	\begin{equation}\label{equalcoefficient}
		\kappa_\mu c_\tau=\kappa^\tau c_\mu
		\qquad\text{for all }\tau,\mu\in H_{n,p}.
	\end{equation}
	Pick \(\mu_0\in H_{n,p}\) such that \(c_{\mu_0}\ne 0\), and set
	\(c=\kappa_{\mu_0}/c_{\mu_0}\). Taking \(\mu=\mu_0\) in
	Equation~\eqref{equalcoefficient} gives
	\(\kappa^\tau=c\,c_\tau\) for all \(\tau\in H_{n,p}\). Thus
	\(a_{\tau,\mu}=c\,c_\tau c_\mu\) for all \(\tau,\mu\in H_{n,p}\). Hence
	\[
	z_{(p)}
	=
	\sum_{\tau,\mu\in H_{n,p}} a_{\tau,\mu}X_{\tau,\mu}
	=
	c\sum_{\tau,\mu\in H_{n,p}}c_\tau c_\mu X_{\tau,\mu}
	=
	c\,X_{\pi_0,\pi_0}.
	\]
\end{proof}

As an illustration of Theorem~\ref{thm:trivialshape} consider the algebra \(\TL_3(1)\). Then the radical \(R_{3,1}\) of standard module \(V_{3,1}\) has generator given by the diagram \(\pi_0\) appearing in Equation \eqref{trivialn=3}. In this case, \(Z_{3,2}=0\) and so there is, up to scalar, a unique central element in \(Z_{3,1}\) and it is given by

\begin{equation}\label{stacked_combination}
	\setlength{\abovedisplayskip}{0pt}
	\setlength{\belowdisplayskip}{0pt}
	X_{\pi_0,\pi_0}\quad = \quad 
	\begin{tikzpicture}[
		scale = 0.35, 
		thick, 
		baseline={(0,0)},
		vertex/.style={shape = circle, draw, minimum size = 4pt, inner sep = 1pt, fill=black}
		]
		\node[vertex] (G1-1) at (0.0, 1) {};
		\node[vertex] (G1-2) at (1.5, 1) {};
		\node[vertex] (G1-3) at (3.0, 1) {};
		\node[vertex] (G1-1b) at (0.0, -1) {};
		\node[vertex] (G1-2b) at (1.5, -1) {};
		\node[vertex] (G1-3b) at (3.0, -1) {};
		
		\draw[] (G1-1) .. controls +(0.5, -0.5) and +(-0.5, -0.5) .. (G1-2);
		\draw[] (G1-1b) .. controls +(0.5, 0.5) and +(-0.5, 0.5) .. (G1-2b);
		\draw[] (G1-3) -- (G1-3b);
		
		\node at (4.25, 0) {\(-\)};
		
		\node[vertex] (G2-1) at (5.5, 1) {};
		\node[vertex] (G2-2) at (7.0, 1) {};
		\node[vertex] (G2-3) at (8.5, 1) {};
		\node[vertex] (G2-1b) at (5.5, -1) {};
		\node[vertex] (G2-2b) at (7.0, -1) {};
		\node[vertex] (G2-3b) at (8.5, -1) {};
		
		\draw[] (G2-1) .. controls +(0.5, -0.5) and +(-0.5, -0.5) .. (G2-2);
		\draw[] (G2-2b) .. controls +(0.5, 0.5) and +(-0.5, 0.5) .. (G2-3b);
		\draw[] (G2-3) .. controls +(-0.5, -0.5) and +(0.5, 0.5) .. (G2-1b);
		
		\node at (9.75, 0) {\(-\)};
		
		\node[vertex] (G3-1) at (11.0, 1) {};
		\node[vertex] (G3-2) at (12.5, 1) {};
		\node[vertex] (G3-3) at (14.0, 1) {};
		\node[vertex] (G3-1b) at (11.0, -1) {};
		\node[vertex] (G3-2b) at (12.5, -1) {};
		\node[vertex] (G3-3b) at (14.0, -1) {};
		
		\draw[] (G3-2) .. controls +(0.5, -0.5) and +(-0.5, -0.5) .. (G3-3);
		\draw[] (G3-1b) .. controls +(0.5, 0.5) and +(-0.5, 0.5) .. (G3-2b);
		\draw[] (G3-1) .. controls +(0.5, -0.5) and +(-0.5, 0.5) .. (G3-3b);
		
		\node at (15.25, 0) {\(+\)};
		
		\node[vertex] (G4-1) at (16.5, 1) {};
		\node[vertex] (G4-2) at (18.0, 1) {};
		\node[vertex] (G4-3) at (19.5, 1) {};
		\node[vertex] (G4-1b) at (16.5, -1) {};
		\node[vertex] (G4-2b) at (18.0, -1) {};
		\node[vertex] (G4-3b) at (19.5, -1) {};
		
		\draw[] (G4-2) .. controls +(0.5, -0.5) and +(-0.5, -0.5) .. (G4-3);
		\draw[] (G4-2b) .. controls +(0.5, 0.5) and +(-0.5, 0.5) .. (G4-3b);
		\draw[] (G4-1) -- (G4-1b);
	\end{tikzpicture}
\end{equation}

\section{A lower bound by deformation}

In this section we provide a lower bound for the dimension of the center by an
elementary argument coming from deformation theory.

\begin{prop}\label{prop:lowerbound}
	Let \(\Bbbk\) be a field of characteristic zero and let
	\(\delta\in\Bbbk^\times\). Then
	\[
	\dim_{\Bbbk} Z(\TL_n(\delta))
	\ge
	1+\left\lfloor \frac n2\right\rfloor .
	\]
\end{prop}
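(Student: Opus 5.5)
We propose to prove the lower bound by upper semicontinuity of the dimension of the center under specialization of the parameter. The idea is to work over the polynomial ring \(A=\Bbbk[t]\), or over its localization at the maximal ideal \((t-\delta)\), and to consider the \(A\)-algebra \(\TL_n(t)\). This algebra is free over \(A\) with the diagram basis, and it specializes to \(\TL_n(\delta)\) at \(t=\delta\) and to \(\TL_n(t)\otimes \Bbbk(t)\) at the generic point.

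The key observation is that the center is the kernel of a linear map with polynomial entries. Fix the diagram basis \(\{d\}\). The center of \(\TL_n(s)\), for any value \(s\), is the kernel of the linear map
\[
\Phi_s:\TL_n(s)\to \TL_n(s)^{\,n-1},\qquad x\mapsto ([e_i,x])_{i=1}^{n-1},
\]
since the \(e_i\) generate. In the diagram bases, the matrix of \(\Phi_s\) has entries that are polynomials in \(s\): products of diagrams are diagrams times powers of \(s\).

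The argument then proceeds in three steps.

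\begin{enumerate}
\item We compute the rank of \(\Phi\) over \(\Bbbk(t)\). Over \(\Bbbk(t)\), the parameter \(t=q+q^{-1}\) is generic, so \(q\) is not a root of unity. By the preliminaries (equivalently, Lemma~\ref{lem:cellularfacts} together with the fact that all Gram forms are nondegenerate generically), the algebra \(\TL_n(t)\otimes\overline{\Bbbk(t)}\) is split semisimple. Its simple modules are the \(L_{n,p}=V_{n,p}\) for \(0\le p\le\lfloor n/2\rfloor\), and all of these are nonzero. Hence the center has dimension \(1+\lfloor n/2\rfloor\), and \(\operatorname{rank}\Phi_t=\dim\TL_n-(1+\lfloor n/2\rfloor)\). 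Since the dimension of the center is invariant under base field extension (it is a kernel), this value is also the dimension over \(\Bbbk(t)\).
\item We apply semicontinuity of rank. The rank of a polynomial matrix can only drop under specialization, because all minors of size \(\operatorname{rank}\Phi_t+1\) vanish identically. Hence \(\operatorname{rank}\Phi_\delta\le\operatorname{rank}\Phi_t\).
\item We conclude that \(\dim Z(\TL_n(\delta))=\dim\TL_n-\operatorname{rank}\Phi_\delta\ge 1+\lfloor n/2\rfloor\).
\end{enumerate}

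The main obstacle is step (1): justifying semisimplicity with exactly \(1+\lfloor n/2\rfloor\) simple summands at the generic parameter. The first approach we would try is to show that each Gram determinant of \(V_{n,p}\) is a nonzero polynomial in \(t\). This follows by evaluating at a value where it is visibly nonzero, or by noting that the leading \(t\)-power comes from the diagonal \(\langle\alpha,\alpha\rangle=t^p\), which strictly dominates the off-diagonal terms. Then all \(R_{n,p}=0\) over \(\Bbbk(t)\). The cellular dimension count \(\sum_p(\dim V_{n,p})^2=\dim\TL_n\) forces the algebra to be semisimple, with pairwise nonisomorphic simples \(V_{n,p}\) (Lemma~\ref{lem:cellularfacts}(b)), and absolutely irreducible since they remain simple after base change. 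The center is therefore a product of \(1+\lfloor n/2\rfloor\) copies of the field.
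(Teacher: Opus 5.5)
Your proposal is correct and follows essentially the same route as the paper: both deform the parameter over $\Bbbk[t]$, get the generic center of dimension $1+\lfloor n/2\rfloor$ from split semisimplicity over $\Bbbk(t)$, and pass to $\delta$ by semicontinuity — the paper phrases this last step as an injection $Z(A_t)/tZ(A_t)\hookrightarrow Z(\TL_n(\delta))$, where $A_t$ is the Temperley--Lieb algebra over $\Bbbk[t]$ with parameter $\delta+t$ and $Z(A_t)$ is free over $\Bbbk[t]$, rather than through vanishing minors of the commutator matrix. Your leading-term argument for the Gram determinants ($\langle\alpha,\alpha\rangle=t^p$ strictly dominating the off-diagonal entries) usefully supplies the generic semisimplicity that the paper only asserts.
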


\begin{proof}
	Let \(t\) be a formal variable and consider the \(\Bbbk[t]\)-algebra
	\[
	A_t=\TL_{n,\Bbbk[t]}(\delta+t),
	\]
	which has the usual diagram basis and multiplication rule
	\[
	d*d'=(\delta+t)^N dd',
	\]
	where \(N\) is the number of loops removed and \(dd'\) is the resulting part after loop removal. Setting \(t=0\)
	gives the \(\Bbbk\)-algebra
	\[
	A_0=\TL_n(\delta).
	\]
	As a \(\Bbbk[t]\)-module, \(A_t\) is free with basis the TL diagrams. The
	center \(Z(A_t)\) is also free, since it is a submodule of a free module over
	the PID \(\Bbbk[t]\). Let \(r\) be the rank of \(Z(A_t)\).
	
	Specializing to \(t=0\) preserves centrality, and so there is a map
	\[
	s:Z(A_t)\longrightarrow Z(A_0).
	\]
	If \(z_t\in\ker(s)\), then all diagram coefficients of \(z_t\) are divisible
	by \(t\), so \(z_t=tw_t\) for some \(w_t\in A_t\). Since \(z_t\) is central,
	\(w_t\) is also central since \(0=[z_t,a]=t[w_t,a]\) for all \(a\in A_t\) and \(A_t\) has no \(t\)-torsion. Thus \(\ker(s)=tZ(A_t).\) 
	Therefore the specialization to \(t=0\) induces an injection
	\[
	Z(A_t)/tZ(A_t)\hookrightarrow Z(A_0).
	\]
	Since \(Z(A_t)\) is free of rank \(r\) over \(\Bbbk[t]\), the quotient
	\(Z(A_t)/tZ(A_t)\) has \(\Bbbk\)-dimension \(r\),
	and so \(r\leq \dim_{\Bbbk}Z(A_0).\)
	
	Now extend scalars to the field \(K=\Bbbk(t)\). Then
	\(\TL_{n,K}(\delta+t)\) is split semisimple, since \(\delta+t\) is generic, and
	so its center has dimension \(1+\lfloor n/2\rfloor\). Moreover, after scalar extension the rank \(r\)
	of \(Z(A_t)\) becomes the dimension of the center of
	\(\TL_{n,K}(\delta+t)\). Therefore
	\[
	1+\left\lfloor \frac n2\right\rfloor
	=
	r
	\leq
	\dim_{\Bbbk}Z(A_0)
	=
	\dim_{\Bbbk}Z(\TL_n(\delta)).
	\]
	This proves the result.
\end{proof}

\section{An upper bound for the dimension of the center}

We now prove an upper bound for \(\dim Z(\TL_n(\delta))\). This upper bound will coincide with the lower bound from the previous section and hence it determines the dimension of the center. 

\begin{lem}\label{lem:pzero}
	For every \(n\),
	\[
	\dim (Z_{n,0}/Z_{n,1})= 1.
	\]
\end{lem}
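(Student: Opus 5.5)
The quotient $Z_{n,0}/Z_{n,1}$ injects into $\T_{n,0}/\T_{n,1}$, which is the span of the permutation-type diagram with no cups. Since the only $n$-diagram with zero cups is the identity, this space is $\Bbbk\cdot 1$. So $\dim(Z_{n,0}/Z_{n,1})\le 1$. Equality holds because $1\in Z(\TL_n(\delta))$ and $1\notin \T_{n,1}$.

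The steps, in order, are as follows.

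First, I will observe that a planar pairing with no cups in the top row also has no caps in the bottom row, since the numbers of cups and caps agree. All $n$ strands are therefore through-strands, and planarity forces them to connect top node $k$ to bottom node $k$. Hence the identity diagram is the unique basis element with $c(d)=0$, and $\TL_n(\delta)=\Bbbk\,1\oplus \T_{n,1}$ as vector spaces.

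Second, the map $Z_{n,0}\to \TL_n(\delta)/\T_{n,1}\cong \Bbbk$ has kernel $Z(\TL_n(\delta))\cap \T_{n,1}=Z_{n,1}$. This gives the injection and the upper bound of one.

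Third, the unit is central and has zero component in $\T_{n,1}$. Its image is therefore nonzero, which gives the lower bound.

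There is no genuine obstacle. The only point needing care is the uniqueness of the cup-free diagram, which is the planarity argument in the first step. The degenerate cases $n=0,1$ are covered trivially, since there $\TL_n(\delta)=\Bbbk$ and $\T_{n,1}=0$.
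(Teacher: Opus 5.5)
Your proof is correct and follows the same approach as the paper, which likewise notes that the identity is the unique cup-free basis diagram and is central. You additionally spell out why the identity is the only cup-free diagram (planarity of the through-strands) and why $Z_{n,0}/Z_{n,1}$ injects into $\T_{n,0}/\T_{n,1}\cong\Bbbk$, which the paper leaves implicit.
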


\begin{proof}
	The only basis diagram with no cups is the identity diagram, and this is central. Consequently, \(Z_{n,0}/Z_{n,1}\) has dimension one for all \(n\). 
\end{proof}

\begin{lem}\label{lem:quotientcompression}
	Assume \(n\ge 3\). For each \(i=1,\dots,n-1\) and each \(p\ge 1\), compression induces a linear map
	\[
	C_i: Z_{n,p}/Z_{n,p+1}\longrightarrow Z_{n-2,p-1}/Z_{n-2,p}.
	\]
\end{lem}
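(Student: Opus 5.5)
We need to show two things. First, \(C_i\) carries \(Z_{n,p}\) into \(Z_{n-2,p-1}\). Second, it carries \(Z_{n,p+1}\) into \(Z_{n-2,p}\). Together these give a well-defined linear map on the quotients.

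By Lemma~\ref{lem:compressioncentral}, \(C_i(z)\) is central in \(\TL_{n-2}(\delta)\) whenever \(z\) is central in \(\TL_n(\delta)\). It therefore suffices to prove the filtration statement: for every \(q\ge 1\),
\[
C_i(\T_{n,q})\subseteq \T_{n-2,q-1}.
\]
Applying this with \(q=p\) and \(q=p+1\), and intersecting with the center, gives \(C_i(Z_{n,p})\subseteq Z_{n-2,p-1}\) and \(C_i(Z_{n,p+1})\subseteq Z_{n-2,p}\). Hence \(C_i\) descends to the stated map
\[
Z_{n,p}/Z_{n,p+1}\to Z_{n-2,p-1}/Z_{n-2,p},
\]
and linearity is inherited from the linearity of \(C_i\).

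For the filtration statement, take a basis diagram \(d\) with \(c(d)\ge q\). Since \(\T_{n,q}\) is a two-sided ideal, \(e_i d e_i\in \T_{n,q}\). The product \(e_ide_i\) is a scalar (a power of \(\delta\)) times a single diagram \(d'\). The diagram \(d'\) lies in \(e_i\TL_n(\delta)e_i\), has the cup at \(i,i+1\) on top and the cap at \(i,i+1\) on the bottom, and has \(c(d')\ge q\). Such a diagram is of the form \(d'=D_i(x)\) for a unique basis diagram \(x\) of \(\TL_{n-2}(\delta)\): remove the inserted cup-cap pair and close up the positions. This is the linear isomorphism of Remark~\ref{dilation-isomorphism}, now viewed on bases. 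Removing one cup and one cap gives \(c(x)=c(d')-1\ge q-1\). Thus \(C_i(d)=D_i^{-1}(e_ide_i)\) is a scalar multiple of \(x\in \T_{n-2,q-1}\), and the statement follows by linearity.

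The main point requiring care is the claim that \(D_i\) sends basis diagrams to basis diagrams while raising the cup number by exactly one. It follows that \(D_i^{-1}\) lowers it by exactly one on diagrams of \(e_i\TL_n(\delta)e_i\). This is immediate from the definition of dilation. Inserting a cup at \(i,i+1\) adds one cup and changes no other connection, so the cups of \(x\) remain cups and its through-strands remain through-strands.
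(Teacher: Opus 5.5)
Your proof is correct and follows the paper's argument. Both combine the centrality of \(C_i(z)\) from Lemma~\ref{lem:compressioncentral} with the fact that compression lowers the cup number by at most one, applied at levels \(p\) and \(p+1\); you additionally spell out the cup-count step (that \(e_ide_i\) is a power of \(\delta\) times a diagram \(D_i(x)\) with \(c(x)=c(D_i(x))-1\)), which the paper asserts without detail.
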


\begin{proof}
	If \(z\in Z_{n,p}\), then every diagram appearing in \(z\) has at least \(p\)
	cups, so every diagram appearing in \(C_i(z)\) has at least \(p-1\) cups. By
	Lemma~\ref{lem:compressioncentral}, \(C_i(z)\) is central in
	\(\TL_{n-2}(\delta)\), hence lies in \(Z_{n-2,p-1}\). Similarly, if
	\(z\in Z_{n,p+1}\), then \(C_i(z)\in Z_{n-2,p}\). Thus compression descends to
	the quotient.
\end{proof}

\begin{lem}\label{lem:trivialdimension}
	Suppose that \(p\ge 1\) and that \(R_{n,p}\) is the trivial module. Then
	\[
	\dim (Z_{n,p}/Z_{n,p+1})\leq 1.
	\]
\end{lem}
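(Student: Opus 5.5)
The plan is to use Theorem~\ref{thm:trivialshape} to show that the leading-term map is injective on the quotient and lands in a one-dimensional space. Let \(\pi_0\) be a generator of \(R_{n,p}\), and consider the linear map
\[
\Phi: Z_{n,p}\longrightarrow \T_{n,p}/\T_{n,p+1},\qquad z\longmapsto z_{(p)}=z+\T_{n,p+1}.
\]
This map is defined on all of \(Z_{n,p}\), with the convention that \(z_{(p)}=0\) when \(z\in Z_{n,p+1}\).

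First, I identify the kernel. Since \(Z_{n,p+1}=Z(\TL_n(\delta))\cap\T_{n,p+1}\), we have \(\ker\Phi=Z_{n,p}\cap \T_{n,p+1}=Z_{n,p+1}\). Hence \(\Phi\) induces an injection
\[
Z_{n,p}/Z_{n,p+1}\hookrightarrow \T_{n,p}/\T_{n,p+1}.
\]

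Next, I determine the image. If \(z\in Z_{n,p}\setminus Z_{n,p+1}\), then \(R_{n,p}\) is trivial and \(p\ge 1\), so Theorem~\ref{thm:trivialshape} applies. It gives \(z_{(p)}=c\,X_{\pi_0,\pi_0}\) for some \(c\in\Bbbk\). If instead \(z\in Z_{n,p+1}\), the image is \(0\). In either case \(\Phi(Z_{n,p})\subseteq \Bbbk\,X_{\pi_0,\pi_0}\), viewed modulo \(\T_{n,p+1}\).

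Combining the two steps, the injection \(Z_{n,p}/Z_{n,p+1}\hookrightarrow\Bbbk\,X_{\pi_0,\pi_0}\) gives \(\dim(Z_{n,p}/Z_{n,p+1})\le 1\).

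I expect no real obstacle here. The only points needing care are that the kernel equals \(Z_{n,p+1}\) exactly, which holds by the definition of the filtration, and that Theorem~\ref{thm:trivialshape}, which is stated only for \(z\notin Z_{n,p+1}\), still covers the whole image once elements of \(Z_{n,p+1}\) are sent to zero.
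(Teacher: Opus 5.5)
Your proof is correct and follows the paper's argument: the paper also derives the bound directly from Theorem~\ref{thm:trivialshape}. Your explicit leading-term injection \(Z_{n,p}/Z_{n,p+1}\hookrightarrow \T_{n,p}/\T_{n,p+1}\), with image in the line spanned by the class of \(X_{\pi_0,\pi_0}\), is a more careful version of the paper's one-line remark, which loosely says classes are ``represented by'' multiples of \(X_{\pi_0,\pi_0}\) even though \(X_{\pi_0,\pi_0}\) itself need not be central.
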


\begin{proof}
	By Theorem~\ref{thm:trivialshape}, every nonzero class in
	\(Z_{n,p}/Z_{n,p+1}\) is represented by a scalar multiple of
	\(X_{\pi_0,\pi_0}\). Thus the quotient is at most one-dimensional.
\end{proof}

The next lemma shows that whenever \(V_{n,p}\) does not contain a copy of the trivial module, then every
nonzero class in \(Z_{n,p}/Z_{n,p+1}\) has at least one compression which is not zero.

\begin{lem}\label{lem:detection}
	Assume \(n\ge 3\) and \(p\ge 1\) and suppose that \(V_{n,p}\) contains no nonzero trivial
	submodule. Let \(z\in Z_{n,p}\setminus Z_{n,p+1}\). Then there exists
	\(i\in\{1,\dots,n-1\}\) such that
	\[
	C_i(z)\notin Z_{n-2,p}.
	\]
	Equivalently, if \(\overline z=z+Z_{n,p+1}\), then
	\[
	C_i(\overline z)\ne 0
	\]
	in \(Z_{n-2,p-1}/Z_{n-2,p}\).
\end{lem}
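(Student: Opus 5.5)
We argue by contraposition: assuming that \(C_i(z)\in Z_{n-2,p}\) for every \(i\in\{1,\dots,n-1\}\), we show that \(z\in \T_{n,p+1}\), and hence \(z\in Z_{n,p+1}\). This contradicts the choice of \(z\). The idea is to turn the vanishing of all compressions into the statement that every generator \(e_i\) kills each lower-\(\tau\) part \(z_\tau\in V_{n,p}\). Each such part then spans a trivial submodule, or is zero, and the hypothesis on \(V_{n,p}\) forces it to be zero.

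\textbf{Step 1 (cup counts under dilation).} Dilation \(D_i\) inserts exactly one cup-cap pair and leaves all other connections unchanged. So \(D_i\) sends a basis diagram with \(k\) cups to a basis diagram with \(k+1\) cups, and \(D_i(\T_{n-2,p})\subseteq \T_{n,p+1}\). Suppose \(C_i(z)\in Z_{n-2,p}\subseteq \T_{n-2,p}\). Then
\[
e_ize_i=D_i(C_i(z))\in \T_{n,p+1}.
\]

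\textbf{Step 2 (use centrality).} Since \(z\) is central, \(e_ize_i=e_i^2z=\delta e_iz\). Because \(\delta\neq 0\), Step 1 gives \(e_iz\in\T_{n,p+1}\) for every \(i\). Hence the \(p\)th filtration piece of \(e_iz\) vanishes, and so does every lower-\(\tau\) part: \((e_iz)_\tau=0\) for all \(\tau\in H_{n,p}\). Lemma~\ref{lem:columnextraction} says \((e_iz)_\tau=e_i\cdot z_\tau\) in \(V_{n,p}\). Therefore \(e_i\cdot z_\tau=0\) for all \(i\) and all \(\tau\).

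\textbf{Step 3 (no trivial submodule).} By Step 2, if \(z_\tau\neq 0\) then \(\Bbbk z_\tau\) is a submodule of \(V_{n,p}\) on which every \(e_i\) acts by zero, that is, a copy of \(T_0\). This is excluded by hypothesis, so \(z_\tau=0\) for every \(\tau\in H_{n,p}\). Since the \(z_\tau\) record all coefficients \(a_{\alpha,\tau}\) of \(z_{(p)}\), we get \(z_{(p)}=0\). This means \(z\in\T_{n,p+1}\), and so \(z\in Z_{n,p+1}\), a contradiction.

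The equivalent formulation then follows from Lemma~\ref{lem:quotientcompression}: the class \(C_i(\overline z)\) is zero in \(Z_{n-2,p-1}/Z_{n-2,p}\) exactly when \(C_i(z)\in Z_{n-2,p}\).

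I expect the only delicate point to be Step 1. There one must check that \(D_i\) raises the cup number by exactly one on basis diagrams. One also needs that \(D_i\) is injective onto \(e_i\TL_n(\delta)e_i\) (Remark~\ref{dilation-isomorphism}), so that \(C_i\) is well defined and \(D_i(C_i(z))=e_ize_i\) holds. The remaining steps are direct applications of centrality and Lemma~\ref{lem:columnextraction}.
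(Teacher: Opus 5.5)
Your proof is correct and follows the same route as the paper. Both arguments assume every compression lies in \(Z_{n-2,p}\), deduce \(e_i\cdot z_\mu=0\) for all \(i\) and \(\mu\), and conclude that a nonzero \(z_{\mu_0}\) would span a forbidden trivial submodule. Your Step~2 uses centrality (\(e_ize_i=\delta e_iz\)), the cup-count property of \(D_i\), and Lemma~\ref{lem:columnextraction}; this gives a more rigorous justification of the paper's informal claim that the filtration-\((p-1)\) part of \(C_i(z)\) ``records exactly'' the vectors \(e_i\cdot z_\mu\).
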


\begin{proof}
	Suppose the claim is not true. Then
	\[
	C_i(z)\in Z_{n-2,p}
	\qquad\text{for all}\qquad i=1,\ldots,n-1.
	\]
   For each \(\mu\in H_{n,p}\), let \(z_\mu\) be the lower-\(\mu\) part of
   \(z_{(p)}\). We claim
   \[
   e_i\cdot z_\mu=0
   \qquad\text{for all } i=1,\ldots,n-1.
   \]
	Indeed, left multiplication by \(e_i\) acts on the upper half-diagram in
	\(z_{(p)}\) and leaves the lower half fixed. The element \(C_i(z)\) is obtained
	from \(e_i z e_i\) by removing the local cup-cap pair at positions \(i,i+1\).
	Consequently, the filtration-\((p-1)\) part of \(C_i(z)\) records exactly the
	vectors \(e_i\cdot z_\mu\), as \(\mu\) ranges over \(H_{n,p}\).  Since, by
	assumption, \(C_i(z)\in Z_{n-2,p}\), this filtration-\((p-1)\) part is zero.
	Thus \(e_i\cdot z_\mu=0\).
	
	Since \(z\notin Z_{n,p+1}\), its leading term \(z_{(p)}\) is nonzero, so
	there exists \(\mu_0\in H_{n,p}\) such that \(z_{\mu_0}\ne 0\). Then every
	generator \(e_i\) kills \(z_{\mu_0}\), and hence the submodule of
	\(V_{n,p}\) generated by \(z_{\mu_0}\) is a nonzero trivial submodule. This
	contradicts the hypothesis. Therefore some compression is nonzero in
	\(Z_{n-2,p-1}/Z_{n-2,p}\).
\end{proof}

The next lemma shows that, once the target quotient is one-dimensional, the
compression scalars do not depend on \(i\).

\begin{lem}\label{lem:equalscalars}
	Assume \(n\ge3\), \(p\ge1\), and
	\[
	\dim\left(Z_{n-2,p-1}/Z_{n-2,p}\right)=1.
	\]
	Choose \(y\in Z_{n-2,p-1}\) whose image in the quotient is nonzero.
	For \(\overline z\in Z_{n,p}/Z_{n,p+1}\), write
	\[
	C_i(\overline z)=\lambda_{i,z}(y+Z_{n-2,p}).
	\]
	Then the scalars \(\lambda_{i,z}\) are independent of \(i\).
\end{lem}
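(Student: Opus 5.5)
The plan is to compare compressions at adjacent positions \(i\) and \(j=i\pm1\), using Lemma~\ref{lem:dilationadjacent} together with centrality of \(z\). Since the positions \(1,\dots,n-1\) are linked by a chain of adjacent pairs, it suffices to show \(\lambda_{i,z}=\lambda_{j,z}\) whenever \(|i-j|=1\).

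Fix \(z\in Z_{n,p}\) and \(|i-j|=1\). First compute \(e_jD_i(C_i(z))e_j\) in two ways. By definition \(D_i(C_i(z))=e_ize_i\). So
\[
e_jD_i(C_i(z))e_j=e_je_ize_ie_j .
\]
Because \(z\) is central, this equals \(z\,e_je_ie_ie_j=\delta\, z\,e_je_ie_j=\delta\, z e_j = \delta\, e_j z\), using \(e_i^2=\delta e_i\) and \(e_je_ie_j=e_j\). We want this written as a compression-type expression, so we also note \(e_jze_j=\delta z e_j\), again by centrality and \(e_j^2=\delta e_j\). Hence
\[
e_jD_i(C_i(z))e_j=\delta e_jze_j\cdot\delta^{-1}\cdot\delta^{\,0}=e_jze_j=D_j(C_j(z)),
\]
after checking the scalars carefully: \(e_je_ize_ie_j=z e_je_i^2e_j=\delta z e_j e_i e_j=\delta z e_j = e_j z e_j\). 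On the other hand, Lemma~\ref{lem:dilationadjacent} gives \(e_jD_i(C_i(z))e_j=D_j(C_i(z))\). Therefore \(D_j(C_i(z))=D_j(C_j(z))\), and injectivity of \(D_j\) (Remark~\ref{dilation-isomorphism}) yields \(C_i(z)=C_j(z)\) exactly, at the level of elements of \(\TL_{n-2}(\delta)\).

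Passing to the quotient \(Z_{n-2,p-1}/Z_{n-2,p}\) via Lemma~\ref{lem:quotientcompression}, we get \(\lambda_{i,z}(y+Z_{n-2,p})=\lambda_{j,z}(y+Z_{n-2,p})\). Since \(y\) has nonzero image, \(\lambda_{i,z}=\lambda_{j,z}\). Chaining along \(1,2,\dots,n-1\) then gives independence of \(i\). The scalars are well defined because the quotient is one-dimensional and spanned by the class of \(y\). They are also independent of the representative \(z\) of \(\overline z\), since \(C_i\) descends to the quotient.

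The only delicate step is the scalar bookkeeping in \(e_je_ize_ie_j=e_jze_j\), which I would double-check using only the relations \(e_i^2=\delta e_i\), \(e_je_ie_j=e_j\), and centrality. The hypothesis that the target is one-dimensional is used only to define the \(\lambda_{i,z}\); the argument in fact shows the stronger statement \(C_i(z)=C_j(z)\) for all \(i,j\).
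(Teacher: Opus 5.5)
Your proof is correct, and it takes a cleaner route than the paper's.

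The paper works only modulo \(\T_{n,p+1}\). It starts from the congruences \(e_ize_i\equiv\lambda_{i,z}D_i(y)\) and \(e_{i+1}ze_{i+1}\equiv\lambda_{i+1,z}D_{i+1}(y)\). It multiplies the first on the left by \(e_{i+1}\) and the second on the right by \(e_i\), so that both become \(\delta e_{i+1}e_iz\). It then uses Lemma~\ref{lem:dilationadjacent} and several relations to reach \((\lambda_{i,z}-\lambda_{i+1,z})D_i(y)\equiv 0\pmod{\T_{n,p+1}}\). Finally it needs the extra observation that \(D_i(y)\not\equiv 0\), because \(y\notin Z_{n-2,p}\).

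You instead sandwich by \(e_j\) on both sides. The single exact identity \(e_je_ize_ie_j=\delta ze_j=e_jze_j\), together with Lemma~\ref{lem:dilationadjacent} and injectivity of \(D_j\), gives \(C_i(z)=C_j(z)\) in \(\TL_{n-2}(\delta)\) for every central \(z\). This needs no filtration bookkeeping. It also uses the hypothesis \(\dim(Z_{n-2,p-1}/Z_{n-2,p})=1\) only to define the \(\lambda_{i,z}\).

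Both arguments use exact centrality of \(z\) (the paper also rewrites \(e_{i+1}e_ize_i\) as \(e_{i+1}ze_i^2\)), so the graded formulation gains no generality. Your stronger conclusion would also shorten later steps. For example, it gives directly the injectivity of each \(C_i\) in Lemma~\ref{lem:nontrivialdimension}, and the equality \(C_{n-i}(\ov z)=C_i(\ov z)\) used in Theorem~\ref{thm:invariance}.

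One cosmetic point: the garbled intermediate display with the factor \(\delta^{-1}\cdot\delta^{0}\) should simply be replaced by the corrected computation you give right after it.
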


\begin{proof}
	The assertion is clear if \(\overline z=0\). Otherwise, let
	\(z\in Z_{n,p}\setminus Z_{n,p+1}\) represent \(\overline z\). Since
	\[
	C_i(\overline z)=\lambda_{i,z}(y+Z_{n-2,p}),
	\]
	we have
	\[
	e_i z e_i\equiv \lambda_{i,z}D_i(y)
	\pmod{\T_{n,p+1}}.
	\]
	Fix \(i=1,\ldots,n-2\). Multiplying this congruence on the left by
	\(e_{i+1}\), and using centrality of \(z\), gives
	\[
	\lambda_{i,z}e_{i+1}D_i(y)
	\equiv
	e_{i+1}e_i z e_i
	=
	e_{i+1}z e_i^2
	=
	\delta e_{i+1}z e_i
	=
	\delta e_{i+1}e_i z
	\pmod{\T_{n,p+1}}.
	\]
	Similarly, from
	\[
	e_{i+1} z e_{i+1}\equiv \lambda_{i+1,z}D_{i+1}(y)
	\pmod{\T_{n,p+1}},
	\]
	multiplying on the right by \(e_i\) gives
	\[
	\begin{aligned}
	\lambda_{i+1,z}D_{i+1}(y)e_i
	&\equiv e_{i+1}z e_{i+1}e_i \\
	&= z e_{i+1}^2 e_i \\
	&= \delta z e_{i+1}e_i \\
	&= \delta e_{i+1}e_i z
	\pmod{\T_{n,p+1}}.
	\end{aligned}
	\]
	Therefore
	\[
	\lambda_{i,z}e_{i+1}D_i(y)
	\equiv
	\lambda_{i+1,z}D_{i+1}(y)e_i
	\pmod{\T_{n,p+1}}.
	\]
	By Lemma~\ref{lem:dilationadjacent},
	\[
	D_{i+1}(y)=e_{i+1}D_i(y)e_{i+1}.
	\]
	Hence
	\[
	\lambda_{i,z}e_{i+1}D_i(y)
	\equiv
	\lambda_{i+1,z}e_{i+1}D_i(y)e_{i+1}e_i
	\pmod{\T_{n,p+1}}.
	\]
	Since \(D_i(y)e_i=\delta D_i(y)\), the right-hand side is
	\[
	\begin{aligned}
		\lambda_{i+1,z}e_{i+1}D_i(y)e_{i+1}e_i
		&=
		\delta^{-1}\lambda_{i+1,z}e_{i+1}D_i(y)e_i e_{i+1}e_i  \\
		&=
		\delta^{-1}\lambda_{i+1,z}e_{i+1}D_i(y)e_i  \\
		&=
		\lambda_{i+1,z}e_{i+1}D_i(y).
	\end{aligned}
	\]
	Thus
	\[
	(\lambda_{i,z}-\lambda_{i+1,z})e_{i+1}D_i(y)
	\equiv 0
	\pmod{\T_{n,p+1}}.
	\]
	Multiplying on the left by \(e_i\), and using
	\[
	e_i e_{i+1}D_i(y)
	=
	\delta^{-1}e_i e_{i+1}e_iD_i(y)
	=
	\delta^{-1}e_iD_i(y)
	=
	D_i(y),
	\]
	we get
	\[
	(\lambda_{i,z}-\lambda_{i+1,z})D_i(y)
	\equiv 0
	\pmod{\T_{n,p+1}}.
	\]
	Since \(y\notin Z_{n-2,p}\), the element \(D_i(y)\) is nonzero modulo
	\(\T_{n,p+1}\). Hence
	\[
	\lambda_{i,z}=\lambda_{i+1,z}.
	\]
	This holds for every \(i=1,\ldots,n-2\), so all the scalars are equal.
\end{proof}

\begin{lem}\label{lem:nontrivialdimension}
 Assume \(n\ge 3\) and \(p\ge 1\), and suppose that \(V_{n,p}\) contains no nonzero trivial
submodule. Then
\[
\dim(Z_{n-2,p-1}/Z_{n-2,p})\le 1 \quad \Longrightarrow \quad
\dim(Z_{n,p}/Z_{n,p+1})\le 1.
\]
\end{lem}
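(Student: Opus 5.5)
The plan is to show that the compression map of Lemma~\ref{lem:quotientcompression}, with a fixed index, is injective on \(Z_{n,p}/Z_{n,p+1}\) once the scalars are known to be independent of \(i\). Injectivity into a space of dimension at most one then gives the bound. If \(\dim(Z_{n-2,p-1}/Z_{n-2,p})=0\), Lemma~\ref{lem:detection} already forces \(Z_{n,p}=Z_{n,p+1}\), because every \(z\in Z_{n,p}\setminus Z_{n,p+1}\) would have some \(C_i(\overline z)\ne0\) in the zero space. So we may assume the target quotient has dimension exactly one, and we fix \(y\in Z_{n-2,p-1}\) representing a nonzero class, as in Lemma~\ref{lem:equalscalars}.

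For \(\overline z\in Z_{n,p}/Z_{n,p+1}\), Lemma~\ref{lem:equalscalars} gives a single scalar \(\lambda_z\) with \(C_i(\overline z)=\lambda_z(y+Z_{n-2,p})\) for all \(i\). The assignment \(\overline z\mapsto\lambda_z\) is linear, since each \(C_i\) is linear on the quotient and \(y+Z_{n-2,p}\ne0\). It therefore suffices to show that this functional \(Z_{n,p}/Z_{n,p+1}\to\Bbbk\) is injective.

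Suppose \(\lambda_z=0\) but \(\overline z\ne0\), and pick a representative \(z\in Z_{n,p}\setminus Z_{n,p+1}\). Then \(C_i(\overline z)=0\) for every \(i=1,\dots,n-1\), which means \(C_i(z)\in Z_{n-2,p}\) for all \(i\). This directly contradicts Lemma~\ref{lem:detection}, whose hypothesis that \(V_{n,p}\) contains no nonzero trivial submodule is exactly our assumption. Hence the functional has zero kernel, and \(\dim(Z_{n,p}/Z_{n,p+1})\le1\).

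The only delicate point is the linearity and well-definedness of \(\overline z\mapsto\lambda_z\), including the degenerate case \(\overline z=0\) and sums of classes. This follows because \(C_i\) descends to a linear map on the quotient and the scalar is read off as a coordinate relative to the fixed basis vector \(y+Z_{n-2,p}\). Everything else reduces to combining Lemmas~\ref{lem:detection} and~\ref{lem:equalscalars}. In fact, the independence of \(i\) is not even needed: using the single index \(i=1\), any kernel element would still have to vanish under all \(C_i\) simultaneously, and Lemma~\ref{lem:equalscalars} is precisely what guarantees that.
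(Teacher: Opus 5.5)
Your proof is correct and is essentially the paper's argument. The functional $\overline z\mapsto\lambda_z$ is just a fixed-index compression $C_i$ written in the coordinate $y+Z_{n-2,p}$, and its injectivity comes from combining Lemma~\ref{lem:equalscalars} with Lemma~\ref{lem:detection} exactly as in the paper; your separate treatment of a zero target is a harmless variant of the paper's observation that Lemma~\ref{lem:detection} forces the target to be one-dimensional.

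Your closing remark is self-contradictory, though. The independence of $i$ is genuinely needed: without Lemma~\ref{lem:equalscalars}, the vanishing of a single $C_i(\overline z)$ would not force the other compressions to vanish. Lemma~\ref{lem:detection} alone only makes the joint map $\overline z\mapsto (C_1(\overline z),\dots,C_{n-1}(\overline z))$ injective, and that does not bound the dimension by one.
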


\begin{proof}
	Assume
	\[
	\dim(Z_{n-2,p-1}/Z_{n-2,p})\le 1.
	\]
	If \(Z_{n,p}/Z_{n,p+1}=0\), there is nothing to prove. Thus choose
	\[
	0\ne \overline z\in Z_{n,p}/Z_{n,p+1}.
	\]
	By Lemma~\ref{lem:detection}, some compression of \(\overline z\) is
	nonzero in \(Z_{n-2,p-1}/Z_{n-2,p}\). Therefore
	\[
	\dim (Z_{n-2,p-1}/Z_{n-2,p})=1.
	\]
	Choose \(y\in Z_{n-2,p-1}\) whose image in
	\(Z_{n-2,p-1}/Z_{n-2,p}\) is nonzero.
	
	Fix \(i\). We claim that the induced compression map
	\[
	C_i: Z_{n,p}/Z_{n,p+1}\longrightarrow Z_{n-2,p-1}/Z_{n-2,p},
	\qquad
	\overline w\longmapsto C_i(\overline w)
	\]
	is injective. Indeed, suppose \(C_i(\overline w)=0\). If
	\(\overline w\ne 0\), then Lemma~\ref{lem:equalscalars} implies that all
	compressions of \(\overline w\) vanish, contradicting
	Lemma~\ref{lem:detection}. Hence \(\overline w=0\), and so \(C_i\) is
	injective.
	
	Therefore
	\[
	\dim (Z_{n,p}/Z_{n,p+1})
	\le
	\dim (Z_{n-2,p-1}/Z_{n-2,p})
	=1.
	\]
\end{proof}

\begin{prop}\label{thm:upperbound}
	Fix \(n\ge 1\), let \(\Bbbk\) be a field of characteristic zero, and let
	\(\delta\in\Bbbk^{\times}\).
	\begin{enumerate}
		\item For all \(0\le p\le \lfloor n/2\rfloor\),
		\[
		\dim (Z_{n,p}/Z_{n,p+1})\le 1.
		\]
		\item One has
		\[
		\dim Z(\TL_n(\delta))\le 1+\Bigl\lfloor \frac{n}{2}\Bigr\rfloor.
		\]
	\end{enumerate}
\end{prop}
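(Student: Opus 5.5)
The plan is to prove part (a) by strong induction on \(n\), and then obtain part (b) by summing over the filtration. The case \(p=0\) is Lemma~\ref{lem:pzero}, so at every stage we only need to handle \(1\le p\le\lfloor n/2\rfloor\).

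For the base cases \(n=1,2\) I would compute directly. When \(n=1\) there is no \(p\ge1\) in range. When \(n=2\) the only case is \(p=1\), and then \(\T_{2,1}\) is spanned by the single diagram \(e_1\), so \(\dim Z_{2,1}/Z_{2,2}\le \dim \T_{2,1}=1\).

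For the inductive step, take \(n\ge3\) and \(1\le p\le\lfloor n/2\rfloor\), and assume (a) holds for \(n-2\) at every level. I would then split into three cases according to the radical \(R_{n,p}\). By Lemma~\ref{lem:cellularfacts}(c), \(R_{n,p}\) is either zero or irreducible.

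\emph{Case 1: \(R_{n,p}\cong T_0\).} Lemma~\ref{lem:trivialdimension} gives the bound directly.

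\emph{Case 2: \(R_{n,p}\) is nonzero and not isomorphic to \(T_0\).} By Lemma~\ref{lem:cellularfacts}(d), \(V_{n,p}\) contains no trivial submodule.

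\emph{Case 3: \(R_{n,p}=0\).} Any trivial submodule of \(V_{n,p}\) would be a proper submodule whenever \(\dim V_{n,p}>1\), hence would lie in \(R_{n,p}=0\) by Lemma~\ref{lem:cellularfacts}(a). The only exception is \(\dim V_{n,p}=1\). This happens exactly when \(n=2p\) with \(p=1\), that is, \(n=2\), which is excluded since \(n\ge 3\). Indeed, for \(n=2p\ge4\) the space \(V_{n,p}\) already has at least two half-diagrams. If \(\dim V_{n,p}=1\) did occur, it would be harmless anyway, because then \(\T_{n,p}/\T_{n,p+1}\) is one-dimensional. One subtlety: part (a) of Lemma~\ref{lem:cellularfacts} requires the bilinear form to be nonzero. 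When the radical is zero the form is automatically nonzero, since \(V_{n,p}\neq0\).

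In Cases 2 and 3, Lemma~\ref{lem:nontrivialdimension} applies. Since \(p-1\le\lfloor (n-2)/2\rfloor\), the induction hypothesis gives \(\dim Z_{n-2,p-1}/Z_{n-2,p}\le1\), and Lemma~\ref{lem:nontrivialdimension} then yields \(\dim Z_{n,p}/Z_{n,p+1}\le 1\). The step I expect to need the most care is this case analysis guaranteeing that \(V_{n,p}\) has no trivial submodule in Cases 2 and 3, along with the legitimacy of applying the induction hypothesis at the boundary index \(p-1=\lfloor(n-2)/2\rfloor\).

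For part (b), the filtration \(Z_{n,0}\supseteq\cdots\supseteq Z_{n,\lfloor n/2\rfloor+1}=0\) gives
\[
\dim Z(\TL_n(\delta))=\sum_{p=0}^{\lfloor n/2\rfloor}\dim(Z_{n,p}/Z_{n,p+1})\le 1+\Bigl\lfloor\frac n2\Bigr\rfloor .
\]
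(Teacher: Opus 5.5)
Your proposal is correct and follows the paper's proof essentially step for step: induction from \(n-2\) to \(n\), Lemma~\ref{lem:pzero} for \(p=0\), the same three-way split on \(R_{n,p}\) leading to Lemma~\ref{lem:trivialdimension} or Lemma~\ref{lem:nontrivialdimension}, and then summing over the filtration. The only cosmetic differences are that you settle \(n=2\) by the count \(\dim\T_{2,1}=1\) rather than by commutativity, and in the case \(R_{n,p}=0\) you exclude a trivial submodule via the unique-maximal-submodule property, where the paper argues that \(V_{n,p}\) is simple of dimension \(|H_{n,p}|>1\).
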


\begin{proof}
The proof is by induction on \(n\). For \(n=1,2\), the algebra
\(\TL_n(\delta)\) is commutative, so the statement is clear.

Assume \(n\ge 3\), and assume the result is known at level \(n-2\).
Fix \(p\) with \(0\le p\le \lfloor n/2\rfloor\). If \(p=0\), then
part (a) follows from Lemma~\ref{lem:pzero}.

Now assume \(p\ge 1\). By the induction hypothesis,
\[
\dim(Z_{n-2,p-1}/Z_{n-2,p})\le 1.
\]
If \(R_{n,p}\) is the trivial module, then
part (a) follows from Lemma~\ref{lem:trivialdimension}.

If \(R_{n,p}\) is nonzero and not the trivial module, then
Lemma~\ref{lem:cellularfacts} implies that \(V_{n,p}\) contains no nonzero
trivial submodule, and part (a) follows from Lemma~\ref{lem:nontrivialdimension}.

If \(R_{n,p}=0\), then \(V_{n,p}\) is simple. Since \(n\ge 3\) and
\(p\ge 1\), we have \(|H_{n,p}|>1\), so \(V_{n,p}\) is not the
one-dimensional trivial module. Hence it contains no nonzero trivial
submodule, and again part (a) follows from Lemma~\ref{lem:nontrivialdimension}.

Thus every graded piece has dimension at most one.

Summing over all \(p\) gives
\[
\dim Z(\TL_n(\delta))
\le
\sum_{p=0}^{\lfloor n/2\rfloor} \dim (Z_{n,p}/Z_{n,p+1})
\le 1+\Bigl\lfloor \frac{n}{2}\Bigr\rfloor.
\]
\end{proof}

\begin{thm}\label{thm:main}
	Let \(\Bbbk\) be a field of characteristic zero and let
	\(\delta\in\Bbbk^\times\). Then:
	\begin{enumerate}
		\item For every \(0\le p\le \lfloor n/2\rfloor\),
		\[
		\dim(Z_{n,p}/Z_{n,p+1})=1.
		\]
		\item The center has dimension
		\[
		\dim Z(\TL_n(\delta))
		=
		1+\left\lfloor \frac n2\right\rfloor .
		\]
	\end{enumerate}
\end{thm}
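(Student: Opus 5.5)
Combine the upper bound of Proposition~\ref{thm:upperbound} with the lower bound of Proposition~\ref{prop:lowerbound}. Since \(\Bbbk\) is a field, the filtration
\[
Z(\TL_n(\delta))=Z_{n,0}\supseteq Z_{n,1}\supseteq\cdots\supseteq Z_{n,\lfloor n/2\rfloor+1}=0
\]
is a finite filtration of a finite-dimensional vector space. Hence
\[
\dim Z(\TL_n(\delta))=\sum_{p=0}^{\lfloor n/2\rfloor}\dim(Z_{n,p}/Z_{n,p+1}),
\]
with equality rather than just the inequality recorded at the end of the proof of Proposition~\ref{thm:upperbound}.

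By Proposition~\ref{thm:upperbound}(a), each of the \(1+\lfloor n/2\rfloor\) summands is at most \(1\). By Proposition~\ref{prop:lowerbound}, the total is at least \(1+\lfloor n/2\rfloor\). A sum of \(1+\lfloor n/2\rfloor\) terms, each in \(\{0,1\}\), that is at least \(1+\lfloor n/2\rfloor\) must have every term equal to \(1\). This gives part (a), and summing gives part (b).

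I do not expect a real obstacle here, since all the work is in the two propositions. The only points to check are:
\begin{enumerate}
\item the number of graded pieces is exactly \(1+\lfloor n/2\rfloor\), which holds because \(\T_{n,\lfloor n/2\rfloor+1}=0\), forcing \(Z_{n,\lfloor n/2\rfloor+1}=0\);
\item the small cases \(n=1,2\) are covered by both propositions (Proposition~\ref{thm:upperbound} treats them directly, and Proposition~\ref{prop:lowerbound} holds for all \(n\)), so no separate base case is needed.
\end{enumerate}
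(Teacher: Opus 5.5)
Your proposal is correct and is essentially the paper's own proof: you write $\dim Z(\TL_n(\delta))$ as the sum of the $1+\lfloor n/2\rfloor$ graded pieces $\dim(Z_{n,p}/Z_{n,p+1})$, bound each piece by one via Proposition~\ref{thm:upperbound}(a), and use the deformation lower bound of Proposition~\ref{prop:lowerbound} to force every piece to equal one. Your observation that the dimension of the center is equal to (not merely at most) the sum of the graded pieces, since the filtration is finite with $Z_{n,\lfloor n/2\rfloor+1}=0$, is the same identity the paper relies on.
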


\begin{proof}
	By Proposition~\ref{thm:upperbound}, each quotient
	\(Z_{n,p}/Z_{n,p+1}\) has dimension at most one. Hence
	\[
	\dim Z(\TL_n(\delta))
	=
	\sum_{p=0}^{\lfloor n/2\rfloor}
	\dim_{\Bbbk}(Z_{n,p}/Z_{n,p+1})
	\le
	1+\left\lfloor \frac n2\right\rfloor .
	\]
	The reverse inequality is Proposition~\ref{prop:lowerbound}. Therefore equality
	holds in the displayed inequality. Since there are exactly
	\(1+\lfloor n/2\rfloor\) summands, and each summand is at most one, each summand
	must be equal to one. This proves both parts.
\end{proof}

\section{Invariance of the center}

There is an action of the group \(C_2\times C_2\) on \(\TL_n(\delta)\) induced by a pair of commuting involutions. The algebra anti-automorphism
\[
\omega:\TL_n(\delta)\to \TL_n(\delta)
\]
and algebra automorphism
\[
\sigma:\TL_n(\delta)\to \TL_n(\delta)
\]
are defined on products of generators by
\[
\omega(e_{i_1}\cdots e_{i_s})=e_{i_s}\cdots e_{i_1},
\qquad
\sigma(e_{i_1}\cdots e_{i_s})=e_{n-i_1}\cdots e_{n-i_s},
\]
and extended linearly. Diagrammatically, \(\omega\) is reflection across the horizontal bisector and \(\sigma\) is reflection across the vertical bisector.

\begin{lem}\label{lem:reflectionscompression}
The involutions \(\omega\) and \(\sigma\) preserve the filtration \(Z_{n,p}\). Moreover, for each \(i=1,\dots,n-1\) and each central element \(z\in Z_{n,p}\),
\[
\omega(C_i(z))=C_i(\omega(z))
\qquad \text{and} \qquad
\sigma(C_i(z))=C_{n-i}(\sigma(z)).
\]
Consequently, both \(\omega\) and \(\sigma\) induce linear maps on each quotient \(Z_{n,p}/Z_{n,p+1}\).
\end{lem}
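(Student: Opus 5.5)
The plan is to verify everything at the level of individual diagrams, using the diagrammatic descriptions of \(\omega\) and \(\sigma\) as reflections, and then pass to central elements by linearity.

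\textbf{Step 1: the filtration is preserved.} Both reflections send a basis diagram to a basis diagram with the same number of cups. Under \(\omega\), top cups become bottom caps, but a diagram always has equally many cups and caps; under \(\sigma\), cups stay cups. Hence \(\omega\) and \(\sigma\) preserve each \(\T_{n,p}\). An automorphism or anti-automorphism maps the center onto itself, since \(\omega(z)\omega(a)=\omega(az)=\omega(za)=\omega(a)\omega(z)\) and similarly for \(\sigma\). Therefore both maps preserve \(Z_{n,p}=Z(\TL_n(\delta))\cap\T_{n,p}\).

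\textbf{Step 2: the dilation identities.} Next we check two identities directly from the definition of dilation:
\[
\omega_n\circ D_i=D_i\circ\omega_{n-2}
\qquad\text{and}\qquad
\sigma_n\circ D_i=D_{n-i}\circ\sigma_{n-2},
\]
with subscripts indicating the level. For \(\omega\), reflecting in the horizontal axis swaps the inserted cup and cap at positions \(i,i+1\) and reflects everything else. For \(\sigma\), the inserted pair at positions \(i,i+1\) of an \(n\)-diagram moves to positions \(n-i,n-i+1\). The remaining \(n-2\) nodes are reflected as a \((n-2)\)-diagram, because node \(k<i\) goes to \(n+1-k\), which is node \(n-1-k\) of the reduced diagram shifted by two. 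This is the only point needing care, and it is a bookkeeping check on indices. The same identities also follow from Lemma~\ref{lem:dilationbasic}(d) on generators, but since \(D_i\) is not multiplicative the diagram check is cleaner.

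\textbf{Step 3: the compression identities.} Using \(\omega(e_i)=e_i\) and \(\sigma(e_i)=e_{n-i}\), we have
\[
\omega(e_ize_i)=e_i\omega(z)e_i,
\qquad
\sigma(e_ize_i)=e_{n-i}\sigma(z)e_{n-i}.
\]
Applying \(D_i^{-1}\), respectively \(D_{n-i}^{-1}\), and using the identities from Step 2 (inverted on the images \(e_i\TL_n e_i\)) gives
\[
\omega(C_i(z))=C_i(\omega(z))
\qquad\text{and}\qquad
\sigma(C_i(z))=C_{n-i}(\sigma(z)).
\]
These hold for any \(z\), in particular for central ones.

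\textbf{Step 4: the induced maps.} Since \(\omega\) and \(\sigma\) are linear and map \(Z_{n,p}\) into \(Z_{n,p}\) and \(Z_{n,p+1}\) into \(Z_{n,p+1}\) by Step 1, they descend to linear maps on each quotient \(Z_{n,p}/Z_{n,p+1}\).

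The only mild obstacle is the index shift in the \(\sigma\) identity of Step 2.
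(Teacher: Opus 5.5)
Your proof is correct and follows essentially the same route as the paper: the filtration and the center are preserved by the diagrammatic reflections, and the compression identities then follow from \(\omega(e_i)=e_i\), \(\sigma(e_i)=e_{n-i}\) and the compatibility of the reflections with the inserted cup--cap pair. The only difference is that you state this compatibility explicitly as \(\omega_n\circ D_i=D_i\circ\omega_{n-2}\) and \(\sigma_n\circ D_i=D_{n-i}\circ\sigma_{n-2}\) and check the index shift, which the paper leaves as a one-line diagrammatic remark.
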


\begin{proof}
Both \(\omega\) and \(\sigma\) preserve the diagram basis and the number of cups, hence preserve each \(\T_{n,p}\). They also preserve the center, so they preserve \(Z_{n,p}=Z(\TL_n(\delta))\cap \T_{n,p}\).

For compatibility with compression, note that \(\omega(e_i)=e_i\) for all \(i\), and diagrammatically \(\omega\) commutes with removing the local cup-cap pair at positions \(i,i+1\). Thus
\[
\omega(C_i(z))=\omega(D_i^{-1}(e_i z e_i))=D_i^{-1}(e_i\omega(z)e_i)=C_i(\omega(z)).
\]
Similarly, \(\sigma(e_i)=e_{n-i}\), and \(\sigma\) sends the local cup-cap pair at positions \(i,i+1\) to the one at positions \(n-i,n-i+1\). Hence
\[
\sigma(C_i(z))=C_{n-i}(\sigma(z)).
\]
The induced maps on the quotients are immediate.
\end{proof}

\begin{thm}\label{thm:invariance}
Fix \(n\ge 1\), let \(\Bbbk\) be a field of characteristic zero, and let \(\delta\in\Bbbk^{\times}\).
\begin{enumerate}
\item For all \(p=0,\dots,\lfloor n/2\rfloor\), the induced maps of \(\omega\) and \(\sigma\) on \(Z_{n,p}/Z_{n,p+1}\) are
the identity.
\item Every element of \(Z(\TL_n(\delta))\) is fixed by \(\omega\) and \(\sigma\).
\end{enumerate}
\end{thm}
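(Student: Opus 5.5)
Part (b) follows from part (a) by a descending induction on \(p\), and part (a) I will prove by induction on \(n\), using the compression maps of Lemma~\ref{lem:reflectionscompression}. For (b), take \(z\in Z(\TL_n(\delta))\) and \(\phi\in\{\omega,\sigma\}\). Then \(\phi(z)-z\) is central, because \(\phi\) preserves the center. By (a) it lies in \(Z_{n,1}\). Suppose instead that \(\phi(z)-z\in Z_{n,p}\) for some \(p\). For each \(p\) I choose a representative \(w_p\in Z_{n,p}\) of a generator of the one-dimensional quotient \(Z_{n,p}/Z_{n,p+1}\) (Theorem~\ref{thm:main}); these \(w_p\) span \(Z(\TL_n(\delta))\). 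The cleanest route is to show \(\phi(w_p)=w_p\) directly by downward induction on \(p\). By (a), \(\phi(w_p)-w_p\in Z_{n,p+1}\), which is spanned by \(w_{p+1},w_{p+2},\dots\). This alone does not force the difference to vanish. I fix this by symmetrizing: replace \(w_p\) by \(\tfrac14\sum_{g\in\langle\omega,\sigma\rangle} g(w_p)\). This element is central, is fixed by \(\omega\) and \(\sigma\), and by (a) still has the same nonzero class in \(Z_{n,p}/Z_{n,p+1}\) (characteristic zero allows division by \(4\)). The symmetrized \(w_p\) then form a basis of the center consisting of invariant elements, which proves (b).

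For (a), \(\omega\) and \(\sigma\) are involutions acting on a one-dimensional space \(Z_{n,p}/Z_{n,p+1}\). Each therefore acts by \(\pm1\), and I must rule out \(-1\). For \(p=0\) the class is represented by the identity, which is fixed by both maps. For \(n=1,2\) everything is spanned by \(1\) and \(e_1\), both of which are fixed. For \(n\ge3\) and \(p\ge1\) I split into the same two cases as in Proposition~\ref{thm:upperbound}.

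\emph{Trivial-radical case.} By Theorem~\ref{thm:trivialshape} the leading term is \(cX_{\pi_0,\pi_0}\) with \(c\neq0\). Here \(\omega(X_{\pi_0,\pi_0})=X_{\pi_0,\pi_0}\) immediately. For \(\sigma\), note that \(\sigma(\pi_0)\) (the vertically reflected half-diagram combination) again spans a trivial submodule, since \(\sigma\) permutes the \(e_i\). Hence \(\sigma(\pi_0)=\epsilon\pi_0\) with \(\epsilon\in\Bbbk^\times\), and \(\epsilon^2=1\). Since \(\sigma(X_{\pi_0,\pi_0})=X_{\sigma\pi_0,\sigma\pi_0}=\epsilon^2X_{\pi_0,\pi_0}=X_{\pi_0,\pi_0}\), the leading term is fixed.

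\emph{Remaining case.} Here \(V_{n,p}\) has no trivial submodule, and by the proof of Lemma~\ref{lem:nontrivialdimension} each \(C_i\) is injective on the quotient and lands in the one-dimensional \(Z_{n-2,p-1}/Z_{n-2,p}\), with scalars \(\lambda\) independent of \(i\) (Lemma~\ref{lem:equalscalars}). Take \(\ov z\neq0\) with \(\omega(\ov z)=\varepsilon\ov z\). Lemma~\ref{lem:reflectionscompression} and the induction hypothesis at level \(n-2\) give
\[
\varepsilon C_i(\ov z)=C_i(\omega\ov z)=\omega(C_i\ov z)=C_i(\ov z),
\]
and since \(C_i(\ov z)\neq0\) this forces \(\varepsilon=1\). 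For \(\sigma\) with eigenvalue \(\varepsilon\) the same computation gives
\[
\varepsilon C_{n-i}(\ov z)=C_{n-i}(\sigma\ov z)=\sigma(C_i\ov z)=C_i\ov z .
\]
Because the scalars are independent of \(i\), \(C_{n-i}(\ov z)=C_i(\ov z)\neq0\), so again \(\varepsilon=1\).

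The main obstacle is the \(\sigma\)-part of the trivial-radical case: I need \(\sigma\) to act on \(V_{n,p}\) compatibly, in the sense that \(\sigma(X_{\alpha,\beta})=X_{\sigma\alpha,\sigma\beta}\) with \(\sigma\) twisting the module action by \(e_i\mapsto e_{n-i}\). I also need the twisted trivial module to still be trivial, so that it coincides with the unique trivial submodule \(R_{n,p}\). Both facts should be routine, but they are where care is needed. The squaring \(\epsilon^2\) then removes any sign ambiguity.
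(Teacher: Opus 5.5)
Your proof is correct. Part (a) follows the paper, and part (b) reaches the conclusion by a different route.

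In (a) you go case by case exactly as the paper does:
\begin{enumerate}
\item the identity class for \(p=0\);
\item the shape \(X_{\pi_0,\pi_0}\) from Theorem~\ref{thm:trivialshape} in the trivial-radical case;
\item otherwise, a nonzero compression combined with Lemma~\ref{lem:reflectionscompression}, Lemma~\ref{lem:equalscalars} and the level-\((n-2)\) hypothesis.
\end{enumerate}
You also justify \(\sigma(\pi_0)=\pm\pi_0\), which the paper only asserts. Your reason is sound: \(\sigma(\pi_0)\) spans a one-dimensional, hence proper, trivial submodule, so it lies in \(R_{n,p}=\Bbbk\pi_0\).

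In (b) the paper takes \(z\in Z_{n,p}\setminus Z_{n,p+1}\) and pushes \(\phi(z)-z\) down the filtration. The fact that makes this work, left implicit in the paper, is that \(w=\phi(z)-z\) satisfies \(\phi(w)=-w\). So if \(w\in Z_{n,r}\setminus Z_{n,r+1}\), part (a) gives \(-2w=\phi(w)-w\in Z_{n,r+1}\), which forces \(w=0\).

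The same observation would have rescued your abandoned downward induction on the \(w_p\). There \(u=\phi(w_p)-w_p\) is \(\phi\)-fixed by the induction hypothesis and also satisfies \(\phi(u)=-u\), so \(u=0\). Your claim that the descent "alone does not force the difference to vanish" is therefore not quite right.

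Your averaging over \(\{1,\omega,\sigma,\omega\sigma\}\) is nevertheless a valid alternative:
\begin{enumerate}
\item the average stays in \(Z_{n,p}\);
\item it keeps the same nonzero class by (a);
\item the averaged lifts still form a basis by Theorem~\ref{thm:main}, so the center has a basis of invariant elements.
\end{enumerate}

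The two routes buy different things. The paper's step from (a) to (b) is more general: an involution that preserves a finite filtration and acts trivially on the graded pieces is the identity in characteristic \(\ne 2\), with no basis or dimension count needed. Yours relies on the basis, but it handles \(\omega\) and \(\sigma\) at once and makes the division by the group order explicit.

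Delete the dangling sentence beginning ``Suppose instead that \(\phi(z)-z\in Z_{n,p}\)''; it plays no role.
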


\begin{proof}
We first prove part (a). If \(p=0\), then \(Z_{n,0}/Z_{n,1}\) is spanned by the identity class, and the
claim is clear. We now proceed under the assumption that \(p\geq 1\). If \(R_{n,p}\) is the trivial module with generator \(\pi_0\), then Theorem~\ref{thm:trivialshape} shows that \(Z_{n,p}/Z_{n,p+1}\) is spanned by \(X_{\pi_0,\pi_0}\). It is immediate that \(\omega(X_{\pi_0,\pi_0})=X_{\pi_0,\pi_0}\). Also \(\sigma(\pi_0)=\pm \pi_0\), so \(\sigma(X_{\pi_0,\pi_0})=X_{\pi_0,\pi_0}\).

In this case \(V_{n,p}\) contains no nonzero trivial submodule: if
\(R_{n,p}\ne0\), this follows from Lemma~\ref{lem:cellularfacts}; if
\(R_{n,p}=0\), then \(V_{n,p}\) is simple and, since \(n\ge3\) and \(p\ge1\),
we have \(|H_{n,p}|>1\), so \(V_{n,p}\) is not the one-dimensional trivial
module. 

Let \(0\neq \ov z\in Z_{n,p}/Z_{n,p+1}\). By Lemma~\ref{lem:detection}, there exists \(i\) such that \(C_i(\ov z)\neq 0\). Since \(Z_{n-2,p-1}/Z_{n-2,p}\) is one-dimensional and fixed by induction, both \(\omega\) and \(\sigma\) act trivially on \(C_i(\ov z)\).

Since \(Z_{n,p}/Z_{n,p+1}\) is one-dimensional and \(\omega\) and \(\sigma\)
are involutions, each induced map acts as multiplication by \(1\) or \(-1\).
If \(\omega(\ov z)=-\ov z\), then Lemma~\ref{lem:reflectionscompression} gives
\[
\omega(C_i(\ov z))=C_i(\omega(\ov z))=-C_i(\ov z),
\]
contradicting the induction hypothesis. Hence \(\omega(\ov z)=\ov z\). 

Now suppose that \(\sigma(\overline z)=-\overline z\). By
Lemma~\ref{lem:reflectionscompression},
\[
\sigma(C_i(\overline z))
=
C_{n-i}(\sigma(\overline z))
=
-C_{n-i}(\overline z).
\]
By Lemma~\ref{lem:equalscalars}, all compressions of \(\overline z\) are the
same scalar multiple of a fixed generator of
\(Z_{n-2,p-1}/Z_{n-2,p}\). Since \(C_i(\overline z)\ne0\), it follows that
\( C_{n-i}(\overline z)=C_i(\overline z)\ne 0\), and so
\(\sigma(C_i(\overline z))=-C_i(\overline z)\).
This contradicts the induction hypothesis, since \(\sigma\) acts trivially on
\(Z_{n-2,p-1}/Z_{n-2,p}\). Therefore
\(\sigma(\overline z)=\overline z\).

For part (b), let \(z\in Z(\TL_n(\delta))\). Choose \(p\) with \(z\in Z_{n,p}\setminus Z_{n,p+1}\). Since \(\omega\) acts trivially on \(Z_{n,p}/Z_{n,p+1}\), one has \(\omega(z)-z\in Z_{n,p+1}\). Applying the same argument repeatedly down the finite filtration shows that \(\omega(z)-z\) lies in every \(Z_{n,r}\) and hence is zero. Thus \(\omega(z)=z\). The same argument applies to \(\sigma\).
\end{proof}

\section{Congruence criterion for trivial radicals}

The goal of this section is to establish a congruence condition which determines precisely when \(R_{n,p}\) is the
trivial module.  This question can be analyzed using the block structure of
\(\TL_n(\delta)\) at roots of unity, studied by Goodman and Wenzl in
\cite{GoodmanWenzl}.  For our purposes, it is more convenient to use the related
terminology and results of \cite[Section 5]{RidoutSaintAubin}, which we
recall as needed.  The main point is that there is a ``reflection formula''
which determines the dimension of \(R_{n,p}\) in terms of a reflected simple
module \(L_{n,p'}\), where \(p'<p\).  We will see that the only way this
reflected simple module can have dimension one is when \(p'=0\).  This will
give the desired congruence condition.

Assume that \(\delta=q+q^{-1}\ne 0\).  Following \cite{RidoutSaintAubin}, the pair \((n,p)\) is
called critical if
\(q^{2(n-2p+1)}=1.\)
Suppose that \(q\) is a root of unity, and let \(\ell\) be the smallest
positive integer such that
\(q^{2\ell}=1.\)
Then \((n,p)\) is critical if and only if
\(\ell\mid n-2p+1.\)
Equivalently, if
\[
n-2p+1=k\ell+r,\qquad 1\le r\le \ell
\]
for some \(k\in \mathbb Z\), then \((n,p)\) is critical exactly when \(r=\ell\).

\begin{lem} If \(L_{n,p}=V_{n,p}/R_{n,p}\) is nonzero, then
	\(L_{n,p}\) is the trivial module if and only if \(p=0\).\label{lem:trivialquotient}
\end{lem}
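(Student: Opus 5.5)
We prove the two directions separately; the forward direction carries the content.

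For \(p=0\), the module \(V_{n,0}\) is spanned by the single half-diagram with no cups, the all-defect half-diagram \(\iota\). Any \(e_i\) applied to \(\iota\) creates a cup, which exceeds \(0\) cups, so \(e_i\cdot\iota=0\) by the stacking rule. Hence \(V_{n,0}\) is the one-dimensional trivial module. Its form is \(\langle\iota,\iota\rangle=1\ne0\), so \(R_{n,0}=0\) and \(L_{n,0}=V_{n,0}\cong T_0\).

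For the converse, suppose \(p\ge1\), \(L_{n,p}\ne0\), and \(L_{n,p}\cong T_0\). By Lemma~\ref{lem:cellularfacts}(b), the nonzero modules \(L_{n,p'}\) form a complete set of pairwise nonisomorphic simple modules, indexed by those \(p'\) with nonzero form. Since \(L_{n,0}\) is nonzero and is the trivial module, the isomorphism \(L_{n,p}\cong T_0\cong L_{n,0}\) with \(p\ne0\) contradicts this classification.

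The one point to check is that the labels in the cellular classification are genuinely distinct, meaning nonzero \(L_{n,p}\) and \(L_{n,p'}\) are nonisomorphic when \(p\ne p'\). This is part of Graham--Lehrer's theorem (\cite[Theorem~3.4]{GrahamLehrer}), which I would cite explicitly.

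If a self-contained argument is preferred, there is a direct one. Suppose \(p\ge1\) and \(L_{n,p}\cong T_0\). Since the form on \(V_{n,p}\) is nonzero, choose \(x\) with \(\langle x,y\rangle\ne0\) for some \(y\). Consider the identity \(X_{y,x}\cdot x=\langle x,x'\rangle\)-type action in the filtration quotient: the element \(X_{y,\tau}\) acts on \(V_{n,p}\) by \(v\mapsto\langle\tau,v\rangle y\). This element has \(p\ge1\) cups, so it lies in the ideal generated by the \(e_i\). That ideal acts by zero on the trivial module \(L_{n,p}\).

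So \(\langle\tau,v\rangle y\in R_{n,p}\) for all \(\tau,v\). Taking \(y\notin R_{n,p}\) and \(\tau,v\) with \(\langle\tau,v\rangle\ne0\) gives a contradiction. Such a \(y\) exists because a nonzero symmetric form has a vector outside its radical.
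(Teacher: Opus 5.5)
Your proposal is correct, but it takes a different route from the paper for \(p\ge1\). Your \(p=0\) case matches the paper's, and you are slightly more careful in checking that \(R_{n,0}=0\).

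\textbf{The paper's argument.} It is purely local. Every \(\pi\in H_{n,p}\) has an innermost cup at some adjacent pair \(i,i+1\), so \(e_i\cdot\pi=\delta\pi\). In a trivial quotient \(e_i\) acts by zero, so \(\delta\ne0\) forces \(\ov{\pi}=0\) for every \(\pi\), and hence \(L_{n,p}=0\).

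\textbf{Your argument.} You use the hypothesis \(L_{n,p}\ne0\) (nonzero form) directly, in one of two ways:
\begin{enumerate}
\item Through the pairwise non-isomorphism in Graham--Lehrer's classification. As you rightly note, this is not stated explicitly in the paper's Lemma~\ref{lem:cellularfacts}(b) and must be cited.
\item Through the cellular identity \(X_{y,\tau}\cdot v=\langle\tau,v\rangle y\). This shows that \(\T_{n,p}\subseteq\T_{n,1}\) acts nontrivially on \(L_{n,p}\), while every diagram with a cup kills \(T_0\).
\end{enumerate}

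\textbf{Comparison.} Your route does not depend on \(\delta\). It remains valid at \(\delta=0\), where the paper's computation \(e_i\cdot\pi=\delta\pi\) yields no contradiction. It is really the general cellular-algebra reason, with the identity \(X_{y,\tau}\cdot v=\langle\tau,v\rangle y\) as its one extra input; that identity is essentially the multiplication rule the paper records in the filtration quotient. The paper's route is shorter and needs nothing beyond the stacking rule, at the cost of using \(\delta\ne0\). That is a standing assumption in that section anyway.

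\textbf{Write-up.} In the self-contained version, remove the garbled phrase about a ``\(X_{y,x}\cdot x=\langle x,x'\rangle\)-type action'' and the unused initial choice of \(x\). The argument only needs \(y\in H_{n,p}\) with \(y\notin R_{n,p}\), together with \(\tau,v\) satisfying \(\langle\tau,v\rangle\ne0\).
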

\begin{proof}
	
	The module \(V_{n,0}\) is one-dimensional and spanned by the diagram with all defects. Every \(e_i\) acts by zero on this diagram, so \(V_{n,0}=L_{n,0}\) is trivial. 
	
	Now suppose \(p>0\), and assume that \(L_{n,p}\) is trivial. Every half-diagram
	\(\pi\in H_{n,p}\) has at least one cup connecting two adjacent vertices
	\(i\) and \(i+1\). For this \(i\), we have \(
	e_i\cdot \pi=\delta \pi.\)
	If \(\ov{\pi}\) is the class of \(\pi\) in \(L_{n,p}\), then \(e_i\cdot \ov{\pi}=0\). On the other hand, \(e_i\cdot \ov{\pi}=\delta  \ov{\pi}\). Since \(\delta\neq 0\), the class \(\ov{\pi}=0\) for all \(\pi\in H_{n,p}\) and so \(L_{n,p}=0\), a contradiction. Thus if \(p>0\), no nonzero \(L_{n,p}\) is the trivial module. 
\end{proof}

\begin{thm}\label{thm:trivialradicalcriterion}
	Assume that \(\delta=q+q^{-1}\ne 0\), and suppose that \(q\) is a root of
	unity. Let \(\ell\) be the smallest positive integer such that
	\[
	q^{2\ell}=1.
	\]
	Then
	\[
	R_{n,p}\text{ is the trivial module}
	\quad\Longleftrightarrow\quad
	1\le p\le \ell-1
	\quad\text{and}\quad
	\ell\mid n-p+1.
	\]
\end{thm}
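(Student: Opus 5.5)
The plan is to read off \(R_{n,p}\) from the reflection formula of Ridout--Saint-Aubin \cite[Section 5]{RidoutSaintAubin}, and then to use Lemma~\ref{lem:trivialquotient} to decide when the reflected simple module is one-dimensional. The case \(p=0\) is handled first and separately. \(V_{n,0}\) is one-dimensional, and the form on its spanning all-defect diagram is \(\langle\cdot,\cdot\rangle=1\). Hence \(R_{n,0}=0\), which is not the trivial module. This matches the right-hand side, which requires \(p\ge1\). From now on I assume \(p\ge 1\).

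Next I recall the reflection formula in the paper's notation. Write the number of defects as \(d=n-2p\), and write
\[
n-2p+1=k\ell+r,\qquad 1\le r\le \ell .
\]
If \((n,p)\) is critical (\(r=\ell\)), then \(V_{n,p}\) is irreducible and \(R_{n,p}=0\). If \(r<\ell\), then \(R_{n,p}\) is either zero or isomorphic to the simple module \(L_{n,p'}\), where \(p'=p-(\ell-r)\). This corresponds to reflecting the defect number \(d\) across the next critical line, which gives \(d'=d+2(\ell-r)\). In particular \(R_{n,p}\neq 0\) exactly when \(p'\ge 0\), that is, when \(d'\le n\). I would state this carefully, citing the precise theorem of \cite{RidoutSaintAubin}. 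As in Lemma~\ref{lem:cellularfacts}(c), I would also note that it transfers from \(\C\) to any field of characteristic zero, because everything is defined over \(\Q(\delta)\).

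With this in hand, \(R_{n,p}\) is the trivial module if and only if it is nonzero and \(L_{n,p'}\) is trivial. By Lemma~\ref{lem:trivialquotient}, since \(L_{n,p'}\ne0\), this happens if and only if \(p'=0\). So the condition is: \((n,p)\) is not critical and \(p=\ell-r\) with \(1\le r\le \ell-1\).

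It remains to translate this into the stated congruence.

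\emph{Forward direction.} Suppose \(p=\ell-r\) with \(1\le r\le\ell-1\). Then \(1\le p\le \ell-1\). Moreover
\[
n-p+1=(n-2p+1)+p=k\ell+r+(\ell-r)=(k+1)\ell ,
\]
so \(\ell\mid n-p+1\).

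\emph{Converse.} Suppose \(1\le p\le\ell-1\) and \(\ell\mid n-p+1\). Then \(n-2p+1\equiv -p\equiv \ell-p\pmod{\ell}\). Since \(1\le \ell-p\le \ell-1\), the residue is \(r=\ell-p\), which differs from \(\ell\). Therefore \((n,p)\) is non-critical and \(p'=p-(\ell-r)=0\). Here \(L_{n,0}=V_{n,0}\) is nonzero, so \(R_{n,p}\cong L_{n,0}\), the trivial module.

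The main obstacle is the second step. I need to get the exact form of the reflection statement in \cite{RidoutSaintAubin}: the direction of the reflection, the fact that the radical is the simple head \(L_{n,p'}\) of the reflected standard module and not that standard module itself, and the precise condition under which \(R_{n,p}\) vanishes. Once this is pinned down, the rest is the arithmetic above.
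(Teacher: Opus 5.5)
Your proposal is correct and follows essentially the same route as the paper: the critical case gives \(R_{n,p}=0\), the reflection \(p'=p+r-\ell\) gives either \(R_{n,p}=0\) (when \(p'<0\)) or \(R_{n,p}\cong L_{n,p'}\), Lemma~\ref{lem:trivialquotient} forces \(p'=0\), and the same arithmetic shows \(r=\ell-p\) is equivalent to \(1\le p\le\ell-1\) and \(\ell\mid n-p+1\). Your separate treatment of \(p=0\) is harmless but not needed, since \(r=\ell-p\) with \(p=0\) would force \(r=\ell\), which the non-critical case already excludes.
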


\begin{proof}
	Write
	\[
	n-2p+1=k\ell+r,
	\qquad 1\le r\le \ell.
	\]
	Then \((n,p)\) is critical exactly when \(r=\ell\). In this case
	\(R_{n,p}=0\) by \cite[Proposition~5.1]{RidoutSaintAubin}, so the radical is
	not the trivial module. Thus we may assume \(1\le r\le \ell-1\).
	
	Put \(p'=p+r-\ell.\) By \cite[Proposition~5.3]{RidoutSaintAubin}, if \(p'<0\), then
	\(R_{n,p}=0\). We may therefore assume \(p'\ge 0\). In this case,
	\cite[Theorem~7.2]{RidoutSaintAubin} gives
	\[
	R_{n,p}\cong L_{n,p'}.\]
	
	Therefore \(R_{n,p}\) is trivial exactly when \(L_{n,p'}\) is trivial. By
	Lemma~\ref{lem:trivialquotient}, this happens exactly when \(p'=0\). So the radical
	\(R_{n,p}\) is trivial if and only if \(r=\ell -p\).
	
	Since \(1\le r\le \ell-1\), the equation \(r=\ell-p\) is possible exactly when
	\[
	1\le \ell-p\le \ell-1,
	\]
	or equivalently
	\[
	1\le p\le \ell-1.
	\]
	Also, because \(r\equiv n-2p+1\pmod{\ell}\), the condition \(r=\ell-p\) is
	equivalent to
	\[
	n-2p+1\equiv -p\pmod{\ell},
	\]
	that is,
	\[
	\ell\mid n-p+1.
	\]
	This proves the theorem.
\end{proof}

\begin{ex}\label{ex:deltaonepattern}
	Suppose \(\delta=1\). Then \(\ell=3\), and
	Theorem~\ref{thm:trivialradicalcriterion} says that \(R_{n,p}\) is the
	trivial module exactly when
	\[
	1\le p\le 2
	\qquad\text{and}\qquad
	3\mid n-p+1.
	\]
	Thus
	\[
	R_{n,1}\text{ is trivial}\quad\Longleftrightarrow\quad n\equiv 0\pmod 3,
	\]
	and
	\[
	R_{n,2}\text{ is trivial}\quad\Longleftrightarrow\quad n\equiv 1\pmod 3.
	\]
	In these cases, the embedding of the trivial module in $V_{n,p}$ is easy to describe. There is no trivial-radical case with \(p\ge 3\).
	
	Now suppose \(n\equiv 2\pmod 3\). Then neither \(R_{n,1}\) nor \(R_{n,2}\)
	is trivial. Thus the first possible nonzero center pieces after the identity
	piece are not of the trivial-radical type. They are instead controlled by
	compression: any nonzero class in
	\[
	Z_{n,p}/Z_{n,p+1}
	\]
	is detected by some compression to
	\[
	Z_{n-2,p-1}/Z_{n-2,p},
	\]
	and its nonzero compression is a scalar multiple of the unique central class
	two nodes down.
\end{ex}

	\section{A Gram matrix computation of the leading term}
The preceding sections prove the one-dimensionality of
\(Z_{n,p}/Z_{n,p+1}\). This final section is a computational supplement. It
records the leading-term equations and explains how the Gram matrix of the
bilinear form on \(\VV\) can be used to solve them. It does not give an
independent proof of the dimension statement.

Write
\[
z_{(p)}=\sum_{\alpha,\beta\in \HH}c_{\alpha,\beta}X_{\alpha,\beta},
\]
where \(c_{\alpha,\beta}\in \Bbbk\), and let
\[
C=(c_{\alpha,\beta})_{\alpha,\beta\in \HH}.
\]
Fix \(i\), set \(e=e_i\), and suppose that the action of \(e\) on \(\VV\) is
given by
\[
e\cdot \alpha=\sum_{\gamma\in \HH}a_{\gamma,\alpha}\gamma.
\]
Let
\[
A=(a_{\gamma,\alpha})_{\gamma,\alpha\in\HH}.
\]
Thus \(A\) is the matrix describing the action of \(e\) on \(\VV\), with
respect to the basis \(\HH\).

Modulo \(\T_{n,p+1}\), left multiplication by \(e\) acts only on the top half
diagram. Hence
\[
eX_{\alpha,\beta}
\equiv
\sum_{\gamma\in \HH}a_{\gamma,\alpha}X_{\gamma,\beta}
\pmod{\T_{n,p+1}}.
\]
Therefore
\[
ez_{(p)}
\equiv
\sum_{\alpha,\beta\in\HH}c_{\alpha,\beta}
\left(\sum_{\gamma\in\HH}a_{\gamma,\alpha}X_{\gamma,\beta}\right)
\pmod{\T_{n,p+1}}.
\]
The coefficient of \(X_{\gamma,\beta}\) in \(ez_{(p)}\) is
\[
\sum_{\alpha\in\HH}a_{\gamma,\alpha}c_{\alpha,\beta},
\]
which is precisely the \((\gamma,\beta)\)-entry of the matrix product \(AC\).

Similarly, modulo \(\T_{n,p+1}\), right multiplication by \(e\) acts only on
the bottom half-diagram. Thus
\[
X_{\alpha,\beta}e
\equiv
\sum_{\gamma\in\HH}a_{\gamma,\beta}X_{\alpha,\gamma}
\pmod{\T_{n,p+1}}.
\]
Hence
\[
z_{(p)}e
\equiv
\sum_{\alpha,\beta\in\HH}c_{\alpha,\beta}
\left(\sum_{\gamma\in\HH}a_{\gamma,\beta}X_{\alpha,\gamma}\right)
\pmod{\T_{n,p+1}}.
\]
The coefficient of \(X_{\alpha,\gamma}\) in \(z_{(p)}e\) is
\[
\sum_{\beta\in\HH}c_{\alpha,\beta}a_{\gamma,\beta},
\]
which is precisely the \((\alpha,\gamma)\)-entry of \(CA^T\). It follows that
\[
ez_{(p)}\equiv z_{(p)}e \pmod{\T_{n,p+1}}
\]
if and only if
\begin{equation}\label{commute}
	AC=CA^T.
\end{equation}
The same condition must hold for each generator \(e_i\).

We now produce a solution of \eqref{commute} with the aid of the Gram matrix \(G\) of the bilinear form.
Specifically, we have
\[
G=(\langle \alpha,\beta\rangle)_{\alpha,\beta\in\HH}.
\]
Since the bilinear form is \(\TL_n(\delta)\)-invariant, we have
\[
\langle e\cdot\alpha,\beta\rangle=\langle \alpha,e\cdot\beta\rangle .
\]
Therefore
\[
\left\langle
\sum_{\gamma\in\HH}a_{\gamma,\alpha}\gamma,\beta
\right\rangle
=
\sum_{\gamma\in\HH}a_{\gamma,\alpha}\langle \gamma,\beta\rangle ,
\]
which is the \((\alpha,\beta)\)-entry of \(A^TG\). Similarly, we obtain
\[
\left\langle
\alpha,\sum_{\gamma\in\HH}a_{\gamma,\beta}\gamma
\right\rangle
=
\sum_{\gamma\in\HH}a_{\gamma,\beta}\langle \alpha,\gamma\rangle ,
\]
which is the \((\alpha,\beta)\)-entry of \(GA\). 

Therefore the \(\TL_n(\delta)\)-invariance property of the bilinear form is equivalent to the matrix equation
\[
A^TG=GA.
\]

If \(\VV\) is irreducible, then \(G\) is invertible. Hence
\[
A^TG=GA
\quad\Longrightarrow\quad
AG^{-1}=G^{-1}A^T.
\]
Thus \(G^{-1}\) is a nonzero solution of the equations
\[
A_iC=CA_i^T
\qquad (i=1,\ldots,n-1).
\]
In the irreducible case, these equations have a one-dimensional solution
space. Indeed, they say that the coefficient matrix \(C\) defines an
intertwining map \(\VV^*\to \VV\), and this space is one-dimensional by
Schur's lemma. Therefore, by \eqref{commute}, the leading term of
the unique nonzero class in \(Z_{n,p}/Z_{n,p+1}\) is, up to scalar,
\[
z_{(p)}
=
\sum_{\alpha,\beta\in\HH}
(G^{-1})_{\alpha,\beta}X_{\alpha,\beta}.
\]

Now suppose that \(\VV\) is reducible at a fixed non-generic value
\(\delta_0\in\Bbbk^\times\). Let \(u\) be an indeterminate. The matrices from the previous calculation now have entries in \(\Bbbk(u)\) and we write these as \(G(u)\) and \(A_i(u)\) to show their dependence on \(u\). As \(u\) is an indeterminate, the Gram matrix is invertible, and the preceding argument gives
\[
A_i(u)G(u)^{-1}=G(u)^{-1}A_i(u)^T
\]
for all \(i\).

At the non-generic value \(u=\delta_0\), the module \(V_{n,p}\) is reducible,
so the Gram matrix \(G(\delta_0)\) is singular. Thus the entries of
\(G(u)^{-1}\) may have poles at \(u=\delta_0\). Let \(r\) be the largest pole order at \(u=\delta_0\) among the entries of
\(G(u)^{-1}\). Define
\[
C_0
=
\left.(u-\delta_0)^rG(u)^{-1}\right|_{u=\delta_0}.
\]
Then \(C_0\) is nonzero, and multiplying the preceding identity by
\((u-\delta_0)^r\) and setting \(u=\delta_0\) gives
\[
A_i(\delta_0)C_0=C_0A_i(\delta_0)^T
\]
for all \(i\).

Writing \(C_0=((C_0)_{\alpha,\beta})_{\alpha,\beta\in\HH}\), the specialized
Gram-matrix construction gives
\[
\widetilde z_{(p)}=
\sum_{\alpha,\beta\in\HH}
(C_0)_{\alpha,\beta}X_{\alpha,\beta}
\]
as an element of the leading filtration quotient.
This element satisfies the same leading commutation relations as the leading
term of a central element. Theorem~\ref{thm:main} shows that the actual quotient
\(Z_{n,p}/Z_{n,p+1}\) is one-dimensional, but it does not identify its leading
term with this specialization of the inverse Gram matrix. It is therefore
natural to ask whether this specialization always detects
the central class, or whether further lower-filtration corrections are needed
to see the connection.

\end{document}